\definecolor{crimson}{rgb}{0.85, 0.08, 0.4}
\definecolor{bleudefrance}{rgb}{0.2, 0.5, 0.9}
\newtheorem{theorem}{Theorem}[section]
\newtheorem{lemma}[theorem]{Lemma}
\newtheorem{corollary}[theorem]{Corollary}
\newtheorem{proposition}[theorem]{Proposition}
\theoremstyle{definition}
\newtheorem{definition}[theorem]{Definition}
\theoremstyle{remark}
\newtheorem{remark}[theorem]{Remark}
\numberwithin{equation}{section}
\renewcommand{\leq}{\leqslant}
\renewcommand{\geq}{\geqslant}
\newsavebox{\proofbox}
\savebox{\proofbox}{\begin{picture}(7,7)  \put(0,0){\framebox(7,7){}}\end{picture}}
\newcommand\E{\mathbb{E}}
\newcommand\R{\mathbb{R}}
\newcommand\C{\mathbb{C}}
\newcommand\N{\mathbb{N}}
\newcommand\im{\operatorname{Im}}
\newcommand\supp{\operatorname{supp}}
\newcommand\GL{\operatorname{GL}}
\newcommand\PP{\operatorname{P}}
\newcommand\PGL{\operatorname{PGL}}
\newcommand\SO{\operatorname{SO}}
\newcommand\Endo{\operatorname{End}}
\newcommand\Q{\mathbb{Q}}
\newcommand{\efface}[1]{}
\pgfplotsset{compat=1.8}
\begin{document}

\author{Richard Aoun}
\address{University Gustave Eiffel, Champs-sur-Marne, 
5 boulevard Descartes Champs-sur-Marne 77420 Marne-la-Vallée Cedex 2, France}
\email{richard.aoun@univ-eiffel.fr}

\author{Cagri Sert}
\address{Institut f\"{u}r Mathematik, Universit\"{a}t Z\"{u}rich, 190, Winterthurerstrasse, 8057 Z\"{u}rich, Switzerland}
\email{cagri.sert@math.uzh.ch}
\thanks{C.S. is supported by SNF Ambizione grant 193481}

\subjclass[2010]{Primary 37H15; Secondary 60J05, 60B15, 37A20}
\keywords{Random walks, stationary measures, projective space,  random matrix products}

\title[Stationary measures on projective spaces]{Stationary probability measures on projective spaces 2:  the critical case}

\begin{abstract}
In a previous article, given a finite-dimensional real  vector space $V$ and a probability measure $\mu$ on $\PGL(V)$ with finite first moment, we gave a description of all $\mu$-stationary probability measures on the projective space $\PP(V)$ in the non-critical (or Lyapunov dominated) case. In the current article, we complete the analysis by providing a full description of the more subtle critical case. Our results demonstrate an algebraic rigidity in this situation. Combining our results with those of Furstenberg--Kifer ('83), Guivarch--Raugi ('07) $\&$ Benoist--Quint ('14), we deduce a classification of all stationary probability measures on the projective space for i.i.d random matrix products with finite first moment without any algebraic assumption.
\end{abstract}

\setcounter{tocdepth}{1}
\maketitle
\section{Introduction}

Let $V$ be a finite-dimensional real
vector space and $\mu$ a probability measure on $\GL(V)$. Let $(X_n)_{n\in \N}$ denote a $\GL(V)$-valued sequence of iid random variables with distribution $\mu$ and write $L_n=X_n \ldots X_1$ for the associated random matrix product. Via the action of $\GL(V)$ on the projective space $\PP(V)$ the random product $L_n$ induces a Markov chain on $\PP(V)$.
The goal of the paper is to give a classification of all stationary probability measures of this Markov chain. In the sequel, we will refer to these as \textit{$\mu$-stationary}, or simply, stationary measures on $\PP(V)$.

Let us start with a brief history. The study of stationary measures on projective spaces was initiated by Furstenberg--Kesten \cite{furstenberg.kesten} and Furstenberg \cite{furstenberg.noncommuting} who realized early on that these encode to a
great extent the asymptotic behaviour of random matrix products. Indeed, using stationary measures, Furstenberg proved the key result of positivity of top Lyapunov exponent and he gave a formula for the Lyapunov exponents (Furstenberg formula). He also showed the uniqueness of stationary measure in the irreducible and proximal case. Under the algebraic semisimplicity assumption, extending the work of Furstenberg, the full description of stationary measures was obtained by Guivarc'h--Raugi \cite{guivarch.raugi.ens} and Benoist--Quint \cite{BQ.projective}. Their result demonstrates an algebraic rigidity phenomenon in the semisimple situation and it shows that in the semisimple case every $\mu$-stationary and ergodic probability measure on $\PP(V)$ is $\mu$-homogeneous, a notion which will make precise below.

The study of stationary measures without the algebraic semisimplicity assumption -- a setting which encompasses many familiar examples such as Bernoulli convolutions or more generally self-affine measures -- was tackled by Furstenberg--Kifer \cite{furstenberg.kifer}  and Hennion \cite{hennion}  who gave a first description of stationary measures without additional algebraic assumptions. Crucially, they used this classification to prove continuity of Lyapunov exponents 
in the critical case, the setting which  corresponds precisely to the setting of our Theorem \ref{thm.main.Lmu=0} below (see also Peres \cite{peres} for further applications). Broadly speaking, without semisimplicity assumptions, for a general classification of stationary measures in this context, there are two dynamically distinguished cases that one needs to deal with. These are the so-called non-critical (or Lyapunov-dominated) and critical cases. Non-criticality refers to a relative contracting or expanding dynamics whereas the more subtle critical case is characterized by absence of difference in dynamical behaviour, or expansion rates, in different invariant subspaces.

Due to the generality of projective dynamics without algebraic assumptions, stationary measures on projective spaces were studied in various special cases of interest: For the critical case, after the pioneering work of Furstenberg \cite{furstenberg.noncommuting}, Kesten--Spitzer \cite{kesten-spitzer} treated the case of non-negative matrices and Bougerol \cite{bougerol.tightness} gave a full description of stationary measures on vector spaces. The case of affine random walks (also called affine stochastic recursion) was brought to conclusion by Bougerol--Picard \cite{bougerol.picard}. These works constitute the first important situations where a complete description in the critical case were achieved. More recently, Benoist--Bru\`{e}re \cite{benoist.bruere} classified stationary probability measures on affine Grassmannians under a Zariski-density assumption.

The goal of the current paper is to give a classification of stationary measures in the critical case, and combine this with our previous work \cite{aoun.sert.stationary1} on the non-critical case to obtain a full description without any assumptions on the random matrix products (except finite first moment assumption). We will prove a rigidity result, showing that in the critical case all ergodic stationary measures are supported on semisimple subspaces (Theorem \ref{thm.main.Lmu=0}). Combined with the aforementioned work of Guivarc'h--Raugi $\&$ Benoist--Quint, this implies that in the critical case all $\mu$-stationary ergodic measures are $\mu$-homogeneous (Corollary \ref{corol.critical.homogene}). Finally, combining this with the results of Furstenberg--Kifer and our previous work, we obtain a description of $\mu$-stationary measures (Theorem \ref{thm.main}): given any probability measure $\mu$ with finite first moment, any $\mu$-stationary ergodic probability measure on $\PP(V)$ is the unique lift of a $\mu$-homogeneous stationary measure.

We now proceed to make all these statements precise.

\bigskip

A probability measure $\mu$ on $\GL(V)$ is said to have \textit{finite first  moment} if 
$$\int\log\max \{\|g\|, \|g^{-1}\|\}
\,d\mu(g)<\infty,$$ where $\|\cdot\|$ is any norm on $\Endo(V)$. Given a probability measure $\mu$ with finite first moment, the top Lyapunov exponent of $\mu$ is the constant, denoted $\lambda_1(\mu)$, which is the almost-sure limit of $\lim_{n\to +\infty}\frac{1}{n}\log \|X_n\cdots X_1\|$. The existence of the  limit is guaranteed by Kingman's subadditive ergodic theorem and was first proved in this setting by Furstenberg--Kesten \cite{furstenberg.kesten}. We denote by $\Gamma_{\mu}$ the closed semigroup of $\GL(V)$ generated by the support of $\mu$. If $W$ is a $\Gamma_\mu$-invariant subspace of $V$, we will also denote by $\lambda_1(W)$ the top Lyapunov exponent of the $\mu$-random matrix products induced on $W$. In particular, with this notation, $\lambda_1(\mu)=\lambda_1(V)$. Our first main result reads as follows:

\begin{theorem} \label{thm.main.Lmu=0}(Critical case)
Let $\mu$ be a probability measure on $\GL(V)$ with   finite first   moment. Assume that the top Lyapunov of any non-zero $\Gamma_\mu$-invariant subspace  of $V$ is equal to $\lambda_1(\mu)$. Then the action of $\Gamma_{\mu}$ on the subspace generated by the support of any $\mu$-stationary probability measure on $\PP(V)$ is semisimple. 
\end{theorem}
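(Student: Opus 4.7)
The plan is a proof by contradiction that reduces the semisimplicity of the action of $\Gamma_\mu$ on $W$ (the linear span of $\supp\nu$) to the non-existence of stationary probability measures for a certain auxiliary affine random walk. First, by the ergodic decomposition of $\mu$-stationary probability measures and the fact that a (necessarily finite, by dimension) sum of semisimple $\Gamma_\mu$-invariant subspaces of $V$ is itself semisimple, it suffices to treat ergodic $\nu$. Assume then that $\nu$ is ergodic, and suppose for contradiction that $\Gamma_\mu$ does not act semisimply on $W$: pick a $\Gamma_\mu$-invariant subspace $\{0\}\subsetneq U\subsetneq W$ without a $\Gamma_\mu$-invariant complement in $W$. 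Since $\PP(U)$ is $\Gamma_\mu$-invariant, ergodicity of $\nu$ forces $\nu(\PP(U))\in\{0,1\}$, and $\nu(\PP(U))=1$ would contradict that $\supp\nu$ spans $W\neq U$. Hence $\nu(\PP(U))=0$, and the push-forward $\bar\nu$ of $\nu$ to $\PP(W/U)$ is a $\mu$-stationary probability measure whose support spans $W/U$.

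To derive a contradiction, fix a linear (not necessarily invariant) splitting $W=U\oplus E$ and write
\[
L_n|_W = \begin{pmatrix} A_n & C_n \\ 0 & D_n \end{pmatrix}, \qquad v=v_U+v_E,
\]
so $A_n=L_n|_U$ and $D_n$ realizes the induced action on $W/U$. The critical assumption restricted to $W$ makes the Furstenberg--Kifer filtration of $W$ trivial; consequently, by Furstenberg--Kifer, $\tfrac{1}{n}\log\|L_n v\|\to\lambda_1(\mu)$ almost surely for every nonzero $v\in W$. The top Furstenberg--Kifer exponent $\beta_{\bar\nu}$ of $\bar\nu$ satisfies $\beta_{\bar\nu}\leq\lambda_1(W/U)\leq\lambda_1(\mu)$, and two regimes arise. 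When $\beta_{\bar\nu}<\lambda_1(\mu)$, for $\nu$-a.e.\ $v$ one has $\tfrac{1}{n}\log\|D_n v_E\|\to\beta_{\bar\nu}$, so $\|D_n v_E\|/\|L_n v\|\to 0$ exponentially fast and $[L_n v]\to \PP(U)$ almost surely; by stationarity of $\nu$ and bounded convergence, $\nu(\mathcal{O})=1$ for every open neighborhood $\mathcal{O}$ of $\PP(U)$, forcing $\nu(\PP(U))=1$, a contradiction.

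The main obstacle is the properly critical regime $\beta_{\bar\nu}=\lambda_1(\mu)$, where $\|L_n v\|$ and $\|D_n v_E\|$ grow at the same exponential rate and the argument must take place at subexponential scale, exploiting the fact that the exact sequence $0\to U\to W\to W/U\to 0$ does not split. Writing $X_k|_W=\bigl(\begin{smallmatrix}A'_k & C'_k\\0 & D'_k\end{smallmatrix}\bigr)$, the cocycle $(C'_k)$ represents the extension class in $H^1(\Gamma_\mu,\Hom(W/U,U))$, which is non-trivial precisely because the extension does not split: no choice of complement $E$ can make it identically zero. The identity $C_n=\sum_{k=1}^n (A'_n\cdots A'_{k+1})\,C'_k\,(D'_{k-1}\cdots D'_1)$ presents $C_n v_E/\|D_n v_E\|$ as a random affine walk on $U$ whose linear part has Lyapunov exponent $\lambda_1(U)-\lambda_1(W/U)=0$ and whose translations are non-trivially random. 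Disintegrating $\nu$ along the projection $\PP(W)\setminus\PP(U)\to\PP(W/U)$ would yield, for $\bar\nu$-a.e.\ fibre, a stationary probability measure for this critical affine walk, and arguments in the spirit of Furstenberg, Kesten--Spitzer, and Bougerol--Picard for critical affine walks rule out the existence of such stationary probability measures, producing the desired contradiction.
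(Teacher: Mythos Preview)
Your reduction to ergodic $\nu$ and the setup of the block decomposition are fine, and the argument in the case $\beta_{\bar\nu}<\lambda_1(\mu)$ is correct (though under the hypothesis $F_2(V)=\{0\}$ this case turns out to be vacuous --- your argument is in effect an independent proof of that).

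The genuine gap is in the critical case $\beta_{\bar\nu}=\lambda_1(\mu)$. The fibres of $\PP(W)\setminus\PP(U)\to\PP(W/U)$ are indeed affine copies of $U$, but the $\mu$-walk does \emph{not} preserve fibres: it moves the basepoint $[v_E]\mapsto[D_n v_E]$, and the affine map on the fibre depends on $v_E$. Consequently the disintegration of $\nu$ over $\bar\nu$ does \emph{not} produce a stationary probability measure for a fixed affine iteration on $U$; what one obtains is only an equivariance of the family $(\nu_{\bar v})_{\bar v}$ under a cocycle with values in $\Aff(U)$ over the base dynamics on $\PP(W/U)$. The results you invoke (Bougerol--Picard, Kesten--Spitzer) address precisely the degenerate situation where the base is a single point (i.e.\ $\dim W/U=1$ with trivial action); they are special cases of the theorem you are trying to prove, not tools available to prove it. Even in that special case your hypothesis ``$U$ has no invariant complement'' translates only to ``the affine action has no fixed point'', which is weaker than the affine irreducibility required by Bougerol--Picard. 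Finally, the cohomological non-triviality of $(C'_k)$ in $H^1(\Gamma_\mu,\Hom(W/U,U))$ is an algebraic statement; you have not explained how it yields the probabilistic non-degeneracy one would need to run a recurrence/drift argument.

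The paper's route is quite different and does not pass through a fibrewise affine picture. After reducing to Zariski-connected $H_\mu$ and non-degenerate $\nu$, it works with limit points $\pi_b$ of $b_1\cdots b_n/\|b_1\cdots b_n\|$ and, using Atkinson-type recurrence of Birkhoff sums to compare growth in two irreducible quotients with equal Lyapunov exponent, produces an idempotent $\pi\in C(\Gamma_\mu)$ with $\pi\nu=\nu_b$ and $\im(\pi)\not\subset U$, $\pi(U)\neq\{0\}$. Furstenberg's theorem that the stabilizer in $\PGL$ of a non-degenerate probability is compact then shows that a Zariski-dense piece of $\pi\Gamma_\mu\pi$ acts on $\im(\pi)$ through a relatively compact group, whence $U\cap\im(\pi)$ has an invariant complement in $\im(\pi)$; a short Zariski-density argument and induction on $\dim V$ upgrade this to an invariant complement of $U$ in $W$. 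The substantive content you are missing is precisely this construction of $\pi$ and the compactness step --- there is no shortcut via existing affine-recursion results.
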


When the $\Gamma_\mu$-action on a vector space $W$ is semisimple (equivalently when the Zariski-closure of $\Gamma_\mu$ in $\GL(W)$ is reductive), Guivarc'h--Raugi \cite{guivarch.raugi.ens} and Benoist--Quint \cite{BQ.projective} proved that any $\mu$-stationary ergodic probability measure $\nu$ is $\mu$-\textit{homogeneous} in the sense that the Zariski-closure of the support of $\nu$ is a closed orbit of $H_\mu$ in $\PP(W)$, where $H_\mu$ denotes the Zariski closure of $\Gamma_\mu$ in $\GL(W)$. Moreover they showed that this orbit supports a unique $\mu$-stationary probability measure. This implies that the irreducible components of the subspace $V_\nu$ generated by the support of $\nu$ have the same weight (in particular $V_\nu$ is critical) so that the hypothesis of Theorem \ref{thm.main.Lmu=0} is in some sense sharp. On the other hand, their results combined with Theorem \ref{thm.main.Lmu=0} imply the following. 

\begin{corollary}\label{corol.critical.homogene}
Keep the assumptions of Theorem \ref{thm.main.Lmu=0}. Then any $\mu$-stationary ergodic probability measure is $\mu$-homogeneous,  and there is a natural bijection 
$$
\{\mu\text{-stationary and ergodic measures on $\PP(V)$}\} \simeq \{ \text{Compact} \; H_\mu\text{-orbits in} \; \PP(V)\}. 
$$
\end{corollary}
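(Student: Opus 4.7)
The plan is to reduce the statement entirely to the semisimple case treated by Guivarc'h--Raugi \cite{guivarch.raugi.ens} and Benoist--Quint \cite{BQ.projective}, and then to establish the bijection by a Kakutani--Markov existence argument on the compact orbit.

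First, given a $\mu$-stationary ergodic probability measure $\nu$ on $\PP(V)$, I would form the subspace $V_\nu$ spanned by $\supp(\nu)$ and invoke Theorem \ref{thm.main.Lmu=0} to conclude that $\Gamma_\mu$ acts semisimply on $V_\nu$. The GR/BQ classification, applied to the $\mu$-walk restricted to $V_\nu$, then yields that $\nu$ is $\mu$-homogeneous: the Zariski closure of $\supp(\nu)$ inside $\PP(V_\nu)$ is a single closed orbit of the Zariski closure of $\Gamma_\mu$ in $\GL(V_\nu)$, and this orbit supports $\nu$ uniquely. Using that the image of $H_\mu$ under the restriction morphism $\GL(V)\supseteq H_\mu \to \GL(V_\nu)$ is precisely this smaller Zariski closure (algebraic morphisms transport Zariski closures), the orbit in question is also a compact $H_\mu$-orbit in $\PP(V)$. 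This already gives the homogeneity statement and defines the forward map $\Phi \colon \nu \mapsto \overline{\supp(\nu)}^{\mathrm{Zar}}$; its injectivity is immediate from the uniqueness clause in the GR/BQ theorem.

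For surjectivity, given a compact $H_\mu$-orbit $\mathcal{O} \subseteq \PP(V)$, I would first invoke a Cesàro--weak-$*$ averaging argument on the compact convex set $\mathcal{P}(\mathcal{O})$ (which is preserved by $\mu$-convolution because $\mathcal{O}$ is compact and $\Gamma_\mu$-invariant) to produce at least one $\mu$-stationary probability measure $\sigma$ supported on $\mathcal{O}$. Taking an ergodic component $\nu$ of $\sigma$ and applying the first part, one sees that $\overline{\supp(\nu)}^{\mathrm{Zar}}$ is an $H_\mu$-orbit contained in the single $H_\mu$-orbit $\mathcal{O}$; the two must then coincide, so $\Phi(\nu) = \mathcal{O}$.

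The main conceptual difficulty is already absorbed by Theorem \ref{thm.main.Lmu=0}: once semisimplicity of the $\Gamma_\mu$-action on $V_\nu$ is at our disposal, the rest is essentially bookkeeping. The only small points that require care are the identification of $H_\mu$-orbits on $\PP(V_\nu)$ with orbits of the smaller Zariski closure in $\GL(V_\nu)$, and the observation that a ``closed $H_\mu$-orbit in $\PP(V_\nu)$'' in the GR/BQ sense is automatically compact in $\PP(V)$, since closed subvarieties of a projective variety are compact.
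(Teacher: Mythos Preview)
Your proposal is correct and follows essentially the same route as the paper, which treats Corollary \ref{corol.critical.homogene} as an immediate consequence of Theorem \ref{thm.main.Lmu=0} combined with the Guivarc'h--Raugi/Benoist--Quint classification (homogeneity and uniqueness from the semisimple case, existence on a compact orbit by a standard averaging argument). One minor imprecision: in the surjectivity step you assert that $\overline{\supp(\nu)}^{\mathrm{Zar}}$ is \emph{contained in} $\mathcal{O}$, which would require $\mathcal{O}$ to be Zariski closed; the clean justification is rather that $\overline{\supp(\nu)}^{\mathrm{Zar}}$ is an $H_\mu$-orbit sharing a point with $\mathcal{O}$, hence equal to it.
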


Corollary \ref{corol.critical.homogene} combined with the works of Furstenberg--Kifer \cite{furstenberg.kifer} and Hennion \cite{hennion} and our previous work \cite{aoun.sert.stationary1} yields the following  classification of stationary probability measures on $\PP(V)$. 

\begin{theorem}[General Classification]\label{thm.main}
Let $\mu$ be a probability measure on $\GL(V)$ with  finite first   moment. Then there exists a filtration $V=F_1 \supsetneq F_2 \cdots \supsetneq F_k \supsetneq \{0\}$ of $\Gamma_{\mu}$-invariant subspaces such that for each $\mu$-stationary ergodic probability $\nu$, there exists $i\in \{1,\cdots, k\}$ such that $\nu(\PP(F_i)\setminus \PP(F_{i+1}))=1$ and $\nu$ is the unique lift in 
$\PP(F_i)\setminus \PP(F_{i+1})$ of a $\mu$-homogeneous stationary measure in   $\PP(F_i/F_{i+1})$. 
\end{theorem}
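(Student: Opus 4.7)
The plan is to synthesize three ingredients: the Furstenberg--Kifer filtration theorem \cite{furstenberg.kifer} (refined by Hennion \cite{hennion}), the critical-case rigidity just established in Corollary \ref{corol.critical.homogene}, and the non-critical lifting theorem from our companion paper \cite{aoun.sert.stationary1}. The key observation is that the Furstenberg--Kifer filtration cuts $V$ into pieces to which one of the two regimes (critical, on the graded quotients; non-critical, from each quotient back to its extension) applies.

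First I would invoke Furstenberg--Kifer to obtain a canonical filtration $V=F_1\supsetneq F_2\supsetneq\cdots\supsetneq F_k\supsetneq\{0\}$ of $\Gamma_\mu$-invariant subspaces together with distinct real numbers $\beta_1>\beta_2>\cdots>\beta_k$ such that, setting $F_{k+1}:=\{0\}$, for every $i$ and every non-zero $\Gamma_\mu$-invariant subspace $W$ of $F_i/F_{i+1}$ one has $\lambda_1(W)=\beta_i$. Their theorem also asserts that every $\mu$-stationary ergodic probability measure $\nu$ on $\PP(V)$ is concentrated on exactly one stratum $\PP(F_i)\setminus\PP(F_{i+1})$, the index $i$ being determined by the almost-sure equality $\lim_n\tfrac{1}{n}\log\|L_n v\|=\beta_i$ for $\nu$-typical $v$. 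This already supplies the filtration and the first conclusion $\nu(\PP(F_i)\setminus\PP(F_{i+1}))=1$ of the theorem.

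Fix such a pair $(\nu,i)$ and push $\nu$ forward through the natural projection $\pi\colon V\setminus F_{i+1}\to (F_i/F_{i+1})\setminus\{0\}$ to obtain a $\mu$-stationary ergodic probability measure $\bar\nu:=\pi_*\nu$ on $\PP(F_i/F_{i+1})$. By construction the $\Gamma_\mu$-action on $F_i/F_{i+1}$ satisfies the hypothesis of Theorem \ref{thm.main.Lmu=0}, so Corollary \ref{corol.critical.homogene} identifies $\bar\nu$ as a $\mu$-homogeneous measure on $\PP(F_i/F_{i+1})$. It remains to show that $\nu$ is the \emph{unique} $\mu$-stationary lift of $\bar\nu$ supported in $\PP(F_i)\setminus\PP(F_{i+1})$. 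For this I would apply the non-critical lifting theorem of \cite{aoun.sert.stationary1} to the pair $F_{i+1}\subset F_i$: all Lyapunov exponents of the $\Gamma_\mu$-action on $F_{i+1}$ are strictly less than the single exponent $\beta_i$ on the quotient $F_i/F_{i+1}$, which is precisely the Lyapunov-dominated (non-critical) regime in which that result yields existence and uniqueness of the lift.

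The only genuinely new input is Theorem \ref{thm.main.Lmu=0}, so the main obstacle has already been dealt with; once the Furstenberg--Kifer filtration, Corollary \ref{corol.critical.homogene}, and the lifting theorem of \cite{aoun.sert.stationary1} are at hand, the remaining argument is essentially bookkeeping to align conventions and verify that the relevant hypotheses (invariance, ergodicity, matching Lyapunov spectra) transfer correctly across the three steps.
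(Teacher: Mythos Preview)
Your proposal is correct and follows essentially the same route as the paper's proof: invoke the Furstenberg--Kifer--Hennion filtration to locate $\nu$ in some stratum $\PP(F_i)\setminus\PP(F_{i+1})$, apply the critical-case result (the paper uses Theorem~\ref{thm.main.Lmu=0} together with Guivarc'h--Raugi and Benoist--Quint, you package this as Corollary~\ref{corol.critical.homogene}) to identify the pushforward $\bar\nu$ as $\mu$-homogeneous, and then use $\lambda_1(F_{i+1})<\lambda_1(F_i/F_{i+1})$ together with \cite[Theorem~1.1]{aoun.sert.stationary1} for the uniqueness of the lift. One small imprecision: the quotient $F_i/F_{i+1}$ need not have a \emph{single} Lyapunov exponent in the Oseledets sense---the criticality condition only says every invariant subspace has top exponent $\beta_i$---but this does not affect the argument since all you need is $\lambda_1(F_{i+1})=\beta_{i+1}<\beta_i=\lambda_1(F_i/F_{i+1})$.
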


Here and elsewhere, when $W<V$ is an invariant subspace, we employ the term \textit{lift} to mean that $\nu$ is a $\mu$-stationary probability measure on $\PP(V) \setminus \PP(W)$ whose pushforward on $\PP(V/W)$ under the map induced by the natural projection $V \to V/W$ is $\overline{\nu}$.

In the above statement, the fact that each ergodic stationary probability measure $\nu$ lives in some $\PP(F_i)\setminus \PP(F_{i+1})$ is contained in Furstenberg--Kifer and Hennion's work. The new information given by Theorem \ref{thm.main} is in the description of these measures: the projection of each such measure $\nu$ on $\PP(F_i/F_{i+1})$ is one of the stationary measures described by Guivarc'h--Raugi and Benoist--Quint (i.e.~ $\mu$-homogeneous) and $\nu$ is the unique lift thereof on the open subset $\PP(F_i)\setminus \PP(F_{i+1})$.
In particular,  combined with the unique ergodicity results of \cite{guivarch.raugi.ens,BQ.projective} for $\mu$-homogeneous measures, this yields a parametrization of $\mu$-stationary and ergodic probability measures by compact $H_\mu$-orbits on the quotients of Furstenberg--Kifer--Hennion spaces:
\begin{corollary}
$$\{\mu\text{-stationary and ergodic probabilities on $\PP(V)$}\} \simeq \bigcup_{i=1}^k \, \{\text{Compact $H_\mu$-orbits in} \, \PP(F_i/F_{i+1})\}. $$
\end{corollary}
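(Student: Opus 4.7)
The plan is to assemble this Corollary directly from Theorem \ref{thm.main} together with the unique-ergodicity half of the Guivarc'h--Raugi and Benoist--Quint classification of $\mu$-homogeneous measures recalled after Theorem \ref{thm.main.Lmu=0} (itself captured in Corollary \ref{corol.critical.homogene} applied to the quotients $F_i/F_{i+1}$). The bijection is built in two stages that we then compose.

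First, I would define the map from left to right. Given a $\mu$-stationary ergodic probability $\nu$ on $\PP(V)$, Theorem \ref{thm.main} provides a unique index $i\in\{1,\ldots,k\}$ such that $\nu(\PP(F_i)\setminus\PP(F_{i+1}))=1$ (uniqueness of $i$ is forced since the sets $\PP(F_i)\setminus\PP(F_{i+1})$ are pairwise disjoint), and it asserts that the pushforward $\overline\nu$ on $\PP(F_i/F_{i+1})$ is $\mu$-homogeneous. By definition of $\mu$-homogeneity, $\supp(\overline\nu)$ is a compact $H_\mu$-orbit in $\PP(F_i/F_{i+1})$, which we attach to $\nu$. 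This yields a well-defined map into the disjoint union on the right-hand side.

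Second, I would check bijectivity. Injectivity: if two ergodic $\mu$-stationary measures $\nu,\nu'$ produce the same orbit in some $\PP(F_i/F_{i+1})$, then they have the same projection $\overline\nu$ (by the uniqueness of the $\mu$-stationary probability measure supported on a closed $H_\mu$-orbit, which is precisely the Guivarc'h--Raugi/Benoist--Quint unique ergodicity statement), and then Theorem \ref{thm.main} says $\nu$ is the \emph{unique} lift of $\overline\nu$ in $\PP(F_i)\setminus\PP(F_{i+1})$, forcing $\nu=\nu'$. Surjectivity: given a compact $H_\mu$-orbit $\mathcal O\subset\PP(F_i/F_{i+1})$, the cited unique ergodicity yields a unique $\mu$-stationary probability $\overline\nu$ supported on $\mathcal O$, and it is ergodic and $\mu$-homogeneous; Theorem \ref{thm.main} then produces a (unique) ergodic lift $\nu$ on $\PP(F_i)\setminus\PP(F_{i+1})\subset\PP(V)$ mapping to $\mathcal O$.

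The proof is therefore largely bookkeeping: no essential obstacle remains since the non-trivial content has already been packaged in Theorem \ref{thm.main} and in the unique ergodicity of $\mu$-stationary measures on compact $H_\mu$-orbits. The one minor point to articulate cleanly is that the Zariski closure $H_\mu$ should be understood as the relevant Zariski closure in $\GL(F_i/F_{i+1})$ for each $i$, so that the statement ``compact $H_\mu$-orbit in $\PP(F_i/F_{i+1})$'' is unambiguous; this is consistent with the convention fixed in Corollary \ref{corol.critical.homogene}.
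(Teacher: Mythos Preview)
Your proposal is correct and follows the same approach that the paper indicates (the paper itself gives no explicit proof, merely stating that the bijection follows from Theorem \ref{thm.main} together with the unique ergodicity results of \cite{guivarch.raugi.ens,BQ.projective}). One small attribution issue: in the surjectivity step, Theorem \ref{thm.main} as stated only describes ergodic $\nu$'s, it does not assert that every $\mu$-homogeneous $\overline{\nu}$ on $\PP(F_i/F_{i+1})$ admits a lift; for that you should invoke directly the existence half of \cite[Theorem 1.1]{aoun.sert.stationary1} (the contracting case $\lambda_1(F_{i+1})<\lambda_1(F_i/F_{i+1})$), which is precisely what the proof of Theorem \ref{thm.main} itself uses.
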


\begin{proof}[Proof of Theorem \ref{thm.main}]
Let $\nu$ be a $\mu$-stationary and ergodic probability measure. By Furstenberg--Kifer \cite{furstenberg.kifer}, there exists a filtration $V=F_1 \supsetneq F_2 \cdots \supsetneq F_k \supsetneq \{0\}$ of $\Gamma_{\mu}$-invariant subspaces and $i \in \{1,\ldots,k\}$ such that $\nu(\PP(F_i))=1$ and $\nu(\PP(F_{i+1} ))=0$. Therefore, $\nu$ projects down to a stationary ergodic measure $\overline{\nu}$ on $\PP(F_i/F_{i+1})$. By \cite[Lemma 3.7]{furstenberg.kifer}, the quotient $F_i/F_{i+1}$ satisfies the assumption of Theorem \ref{thm.main.Lmu=0} (i.e.~ $F_2(F_i/F_{i+1})=\{0\}$) and therefore Theorem \ref{thm.main.Lmu=0} applies and it yields that $\overline{\nu}$ is supported on a $\Gamma_\mu$-completely reducible subspace. Now Guivarc'h--Raugi \cite{guivarch.raugi.ens} $\&$ Benoist--Quint \cite{BQ.projective} applies and shows that $\overline{\nu}$ is a $\mu$-homogeneous stationary measure. 
Since $\lambda_1(F_{i+1})<\lambda_1(F_i/F_{i+1})$, \cite[Theorem 1.1]{aoun.sert.stationary1} implies that $\nu$ is the unique lift of the $\mu$-homogeneous stationary measure $\overline{\nu}$, concluding the proof.
\end{proof}

We note in passing that unlike many results for limit theorems in random matrix products theory, Theorem \ref{thm.main.Lmu=0} is specific to vector spaces over archimedean local fields (so $\R$ or $\C$\footnote{Indeed, Theorem \ref{thm.main.Lmu=0} holds for a complex vector space; this follows easily from the real case.}); it fails for non-archimedean local fields (finite extensions of $\Q_p$ or $\mathbb{F}_p((T))$). Indeed, this result is ultimately a generalization of the fact that a random walk on $\R$ has a stationary probability measure if and only if it is trivial (i.e.~ the law of the random walk is the Dirac measure on $0$) and already this statement obviously fails for non-archimedean local fields.

\bigskip

In specific models, such as the ones coming from homogeneous dynamics as random walks on affine spaces \cite{bougerol.picard} or affine Grassmanians \cite{benoist.bruere}, it is desirable to have a formulation in terms of  lifts with respect to a specific invariant subspace. More precisely, given a $\Gamma_{\mu}$-invariant subspace $W$ of $V$, one is interested in the existence of $\mu$-stationary probability measures on $\PP(V)\setminus \PP(W)$. For instance, a direct consequence of Theorem \ref{thm.main.Lmu=0} is that if $\lambda_1(V)=\lambda_1(W)$ and $W$ and $V/W$ are irreducible, then there is no invariant stationary probability measure on $\PP(V) \setminus \PP(W)$ unless $W$ has an invariant complement. Treating the general case (i.e.~ without irreducibility assumptions) requires some more care and is done in the following result by combining Theorem \ref{thm.main.Lmu=0}, together with \cite{furstenberg.kifer} and the contracting case \cite{aoun.sert.stationary1}.

\begin{theorem}[Lift with respect to a given subspace]\label{thm.with.W}
Let $\mu$ be a probability measure on $\GL(V)$ with finite first moment and $W$ a $\Gamma_\mu$-invariant subspace. Let $\overline{\nu}$ be a $\mu$-stationary and ergodic probability measure on $\PP(V/W)$ such that $\lambda_1(V_{\overline{\nu}})\leq \lambda_1(W)$. 
Then, the following are equivalent: 
\begin{itemize}
    \item[(i)] There exists a $\mu$-stationary lift $\nu$ of $\overline{\nu}$ on $\PP(V) \setminus \PP(W)$. 
\item[(ii)]
There exists a $\Gamma_\mu$-invariant subspace $W'<V$ such that $\lambda_1(W' \cap W)<\lambda_1(V_{\overline{\nu}})$ and $\PP(V_{\overline{\nu}})=\PP(W'/ W'\cap W)$. 
\end{itemize}
In this case, we have $\lambda_1(V_{\overline{\nu}})=\lambda_1(V_\nu)$.
\end{theorem}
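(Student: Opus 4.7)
The plan is to prove the two implications separately, leveraging \cite[Theorem 1.1]{aoun.sert.stationary1} (the ``expanding'' case from our previous paper) together with Theorem \ref{thm.main.Lmu=0} above.

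For (ii) $\Rightarrow$ (i), the strategy is to apply \cite[Theorem 1.1]{aoun.sert.stationary1} to the pair $(W', W' \cap W)$: the hypothesis $\lambda_1(W' \cap W) < \lambda_1(V_{\overline{\nu}}) = \lambda_1(W'/(W' \cap W))$ places us in the setting of that theorem, which produces a unique $\mu$-stationary lift of $\overline{\nu}$, viewed on $\PP(W'/(W' \cap W)) = \PP(V_{\overline{\nu}})$, to $\PP(W') \setminus \PP(W' \cap W)$. Since $W' \setminus (W' \cap W)$ is disjoint from $W$, this lift lives in $\PP(V) \setminus \PP(W)$ and projects to $\overline{\nu}$ on $\PP(V/W)$, yielding (i).

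For (i) $\Rightarrow$ (ii), my approach will be: first, extract a semisimple top quotient of $V_\nu$ (the subspace generated by $\supp \nu$) on which $\nu$ descends via the Furstenberg--Kifer filtration, then use the resulting semisimple structure to build $W'$ as the preimage of an invariant complement. Concretely, I would let $V_\nu = F_1 \supsetneq F_2 \supsetneq \cdots$ be the Furstenberg--Kifer filtration of $V_\nu$ with distinct Lyapunov values $\beta_1 > \beta_2 > \cdots$, so $\beta_1 = \lambda_1(V_\nu)$. Since $V_\nu$ is generated by $\supp \nu$, Furstenberg--Kifer forces $\nu(\PP(F_2)) = 0$, hence $\nu$ pushes forward to a $\mu$-stationary ergodic measure $\nu^\flat$ on $\PP(V_\nu/F_2)$; by \cite[Lemma 3.7]{furstenberg.kifer} the quotient $V_\nu/F_2$ satisfies the hypothesis of Theorem \ref{thm.main.Lmu=0}, whence that theorem gives semisimplicity of the $\Gamma_\mu$-action on $V_\nu/F_2$. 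Letting $q \colon V_\nu \to V_\nu/F_2$ be the projection and $U := q(V_\nu \cap W)$, I would then pick a $\Gamma_\mu$-invariant complement $U^c$ with $V_\nu/F_2 = U \oplus U^c$ and set $W' := q^{-1}(U^c)$. Short linear-algebraic computations will show $W' + W = V_\nu + W$ (hence $\PP(V_{\overline{\nu}}) = \PP(W'/(W' \cap W))$) and $W' \cap W \subseteq F_2 \cap W$ (hence $\lambda_1(W' \cap W) \leq \beta_2$).

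The main obstacle will be ruling out the degenerate case $U^c = 0$ (i.e.~ $V_\nu \cap W + F_2 = V_\nu$), since otherwise $W'$ collapses and the desired strict inequality fails. My plan is to derive a contradiction via a uniqueness argument. In that case, the inclusion $V_\nu \cap W \hookrightarrow V_\nu$ would induce an isomorphism $(V_\nu \cap W)/(F_2 \cap W) \xrightarrow{\sim} V_\nu/F_2$, and \cite[Theorem 1.1]{aoun.sert.stationary1} applied to both $(V_\nu, F_2)$ and $(V_\nu \cap W, F_2 \cap W)$ -- both in the expanding regime since $\lambda_1(F_2 \cap W) \leq \beta_2 < \beta_1$ -- would produce respectively $\nu$ itself as the unique lift of $\nu^\flat$ on $\PP(V_\nu) \setminus \PP(F_2)$ and a second lift $\nu_W$ on $\PP(V_\nu \cap W) \setminus \PP(F_2 \cap W) \subseteq \PP(V_\nu) \setminus \PP(F_2)$. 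Uniqueness on the larger set would then force $\nu = \nu_W$, contradicting the fact that $\nu_W$ is supported in $\PP(W)$ while $\nu$ lives off $\PP(W)$. Hence $U^c \neq 0$; the criticality of $V_\nu/F_2$ then gives $\lambda_1(U^c) = \beta_1$, and the surjection $V_{\overline{\nu}} = V_\nu/(V_\nu \cap W) \twoheadrightarrow V_\nu/(V_\nu \cap W + F_2) = U^c$ yields $\lambda_1(V_{\overline{\nu}}) = \beta_1 = \lambda_1(V_\nu)$ (the final assertion of the theorem) and the strict inequality $\lambda_1(W' \cap W) < \lambda_1(V_{\overline{\nu}})$ completing (ii).
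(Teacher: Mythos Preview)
Your argument is correct and takes a somewhat different route than the paper's. Both proofs ultimately apply Theorem \ref{thm.main.Lmu=0} to the critical quotient $V_\nu/F_2(V_\nu)$, but they diverge in how they handle the interaction between $W$ and $F_2(V_\nu)$. The paper first isolates a ``purely critical'' sub-statement (Theorem \ref{thm.pure.expansion}, the case $\alpha(\overline{\nu}) \leq \beta_{\min}(W)$), where a lift-invariance lemma for cocycle averages (Lemma \ref{lemma.cocycle.invariance}) forces $F_2(V_\nu) \cap W = \{0\}$; this permits lifting the invariant complement in $V_\nu/F_2(V_\nu)$ to an honest complement of $V_\nu \cap W$ inside $V_\nu$. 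The general case is then reduced to this one by quotienting out the appropriate Furstenberg--Kifer piece $F_{j+1}(W)$ of $W$. You bypass both the intermediate theorem and the cocycle lemma: you allow $W' \cap W$ to be nonzero (controlling only that it sits inside $F_2(V_\nu)$, which suffices for the strict Lyapunov inequality), and you dispose of the degenerate possibility $U^c = 0$ by a neat double invocation of the uniqueness clause in \cite[Theorem 1.1]{aoun.sert.stationary1}. What the paper's two-stage decomposition buys is a sharper conclusion in the purely critical regime (an actual direct-sum complement to $W$), which may be of independent interest; your route is shorter and self-contained modulo Theorem \ref{thm.main.Lmu=0} and \cite{aoun.sert.stationary1}. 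One small point to make explicit: you should note at the outset that $\nu$ may be replaced by an ergodic component (any such component still projects to the ergodic $\overline{\nu}$), since the Furstenberg--Kifer conclusion $\nu(\PP(F_2)) = 0$ relies on ergodicity.
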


This theorem generalizes Bougerol's result \cite[Theorem 5.1]{bougerol.tightness} and in particular  Bougerol--Picard \cite{bougerol.picard}   in the invertible case. It also recovers    Benoist--Bru\`{e}re's  results \cite[Theorem 1.6]{benoist.bruere}. 
We note that, unlike the expanding case $\lambda_1(V_{\overline{\nu}})<\lambda_1(W)$ (see \cite[Theorem 1.5]{aoun.sert.stationary1}), the existence of $\mu$-stationary lift does not imply its uniqueness. We refer to Remark \ref{rk.not.unique} for a more detailed explanation.

\bigskip

Finally, it is well-known, thanks to a classical result of Chevalley, that for any real algebraic group $G$ and algebraic subgroup $R$, the algebraic homogeneous space $G/R$ can be realized as  a $G$-orbit in $\PP(V)$ via some representation $G \to \GL(V)$. On the other hand, since $H_\mu$-orbits are locally closed in $\PP(V)$, any $\mu$-stationary ergodic probability measure is supported by a single $H_\mu$-orbit $\mathcal{O}$. Therefore, given a probability measure $\mu$ on $G<\GL(V)$, it is important to be able to describe $H_\mu$-orbits $\mathcal{O}\subseteq \PP(V)$ that support $\mu$-stationary measures. The following corollary that we obtain  by combining Theorem \ref{thm.main.Lmu=0}, \cite[Theorem 1.1]{aoun.sert.stationary1}, and \cite{BQ.projective} gives such a description.

\begin{corollary}\label{corol.homogeneous}
Let $\mu$ be a probability measure on $\GL(V)$ with a finite first moment.
Let $\mathcal{O}$ be a $H_\mu$-orbit in $\PP(V)$. Then the following hold.
\begin{enumerate}
\item 
The orbit $\mathcal{O}$ supports at most one $\mu$-stationary probability measure. 
\item Denote by $V_\mathcal{O}$ the linear space generated by the orbit $\mathcal{O}$ and by $F_\mu:=F_2(V_\mathcal{O})$ the maximal $H_\mu$-invariant subspace of slower expansion. Then the following are equivalent. 
\begin{enumerate}
    \item The orbit $\mathcal{O}$ supports a $\mu$-stationary probability measure.
    \item The image of $\mathcal{O}$ under the natural projection $\PP(V_\mathcal{O}) \setminus \PP(F_\mu) \to \PP(V_\mathcal{O}/F_\mu)$ is compact and $\overline{O} \setminus O \subseteq \PP(F_\mu)$.
\end{enumerate}
\end{enumerate}
\end{corollary}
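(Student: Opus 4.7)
The strategy is to reduce everything to the critical quotient $V_{\mathcal{O}}/F_{\mu}$ (critical by \cite[Lemma 3.7]{furstenberg.kifer}: every non-zero $\Gamma_{\mu}$-invariant subspace of $V_{\mathcal{O}}/F_{\mu}$ has top Lyapunov exponent $\lambda_{1}(V_{\mathcal{O}})$), on which Theorem~\ref{thm.main.Lmu=0}, Guivarc'h--Raugi~\cite{guivarch.raugi.ens} and Benoist--Quint~\cite{BQ.projective} together identify every $\mu$-stationary probability measure as $\mu$-homogeneous, supported on (and the unique such measure on) a compact $H_{\mu}$-orbit. Since $\lambda_{1}(F_{\mu})<\lambda_{1}(V_{\mathcal{O}}/F_{\mu})$ by the very definition of $F_{\mu}=F_{2}(V_{\mathcal{O}})$, \cite[Theorem 1.1]{aoun.sert.stationary1} governs the lift back to $\PP(V_{\mathcal{O}})\setminus\PP(F_{\mu})$, giving a unique $\mu$-stationary lift of any specified $\overline{\nu}$ on the quotient. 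All three facts above must be combined, in essentially this order, to prove both parts.

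For (1) and the compactness half of (a)$\Rightarrow$(b): let $\nu$ be $\mu$-stationary on $\mathcal{O}$. Since $F_{\mu}\subsetneq V_{\mathcal{O}}$, the orbit $\mathcal{O}$ cannot meet $\PP(F_{\mu})$ (otherwise $V_{\mathcal{O}}\subseteq F_{\mu}$), so $\nu(\PP(F_{\mu}))=0$, and $\overline{\nu}:=\pi_{*}\nu$ is a $\mu$-stationary probability measure on $\PP(V_{\mathcal{O}}/F_{\mu})$. By the preceding paragraph, $\overline{\nu}$ is $\mu$-homogeneous with support a compact $H_{\mu}$-orbit $\mathcal{O}_{0}$; for any $x\in\supp(\nu)$ one has $\pi(x)\in\mathcal{O}_{0}\cap\pi(\mathcal{O})$, and two $H_{\mu}$-orbits sharing a common point must coincide, hence $\pi(\mathcal{O})=\mathcal{O}_{0}$ is compact. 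Uniqueness of the lift from \cite[Theorem 1.1]{aoun.sert.stationary1} then pins $\nu$ down, proving (1). For the boundary inclusion $\overline{\mathcal{O}}\setminus\mathcal{O}\subseteq\PP(F_{\mu})$, I argue by contradiction: given $y\in\overline{\mathcal{O}}\setminus(\mathcal{O}\cup\PP(F_{\mu}))$, the $H_{\mu}$-orbit $H_{\mu}\cdot y$ lies in $\overline{\mathcal{O}}\setminus\mathcal{O}$ with $\pi(H_{\mu}\cdot y)=\pi(\mathcal{O})$; applying Furstenberg's convex-compactness argument to the compact $H_{\mu}$-invariant set $\overline{H_{\mu}\cdot y}$ produces a $\mu$-stationary measure whose Furstenberg--Kifer component above $\PP(F_{\mu})$, if nontrivial, is a $\mu$-stationary lift of $\overline{\nu}$ supported disjointly from $\mathcal{O}$, contradicting uniqueness of the lift. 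Ruling out the remaining ``fully-below'' case combines with the explicit description of the lift in \cite{aoun.sert.stationary1} (as a weak limit of random-walk empirical measures starting in $\mathcal{O}$) to close the argument.

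For (b)$\Rightarrow$(a): compactness of $\pi(\mathcal{O})$ and Furstenberg's convex-compactness argument yield a $\mu$-stationary $\overline{\nu}$ on $\pi(\mathcal{O})$, which is $\mu$-homogeneous by the first paragraph; \cite[Theorem 1.1]{aoun.sert.stationary1} lifts $\overline{\nu}$ uniquely to $\nu$ on $\PP(V_{\mathcal{O}})\setminus\PP(F_{\mu})$, and realizing $\nu$ as a weak limit of empirical distributions of random walks starting in $\mathcal{O}$ forces $\supp(\nu)\subseteq\overline{\mathcal{O}}$; the boundary hypothesis (b)(ii) then gives $\supp(\nu)\subseteq\overline{\mathcal{O}}\setminus\PP(F_{\mu})=\mathcal{O}$. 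The main obstacle throughout is extracting orbit-level rigidity from measure-level uniqueness: ruling out extraneous $H_{\mu}$-orbits in the ``affine bundle'' $\pi^{-1}(\pi(\mathcal{O}))$ lying outside $\PP(F_{\mu})$ for (a)$\Rightarrow$(b)(ii), and dually ensuring the constructed lift is carried by $\overline{\mathcal{O}}$ for the converse direction; both points hinge on the interaction between the Furstenberg--Kifer decomposition and the geometry of $H_{\mu}$-orbits, controlled by the uniqueness in \cite[Theorem 1.1]{aoun.sert.stationary1}.
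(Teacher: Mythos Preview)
Your overall strategy---project to the critical quotient $V_{\mathcal O}/F_\mu$, invoke Theorem~\ref{thm.main.Lmu=0} and \cite{guivarch.raugi.ens,BQ.projective} there, then lift via \cite[Theorem~1.1]{aoun.sert.stationary1}---matches the paper exactly, and your argument for (b)$\Rightarrow$(a) and for the compactness of $\pi(\mathcal O)$ and uniqueness in (1) is the same as the paper's.

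The one genuine gap is in your proof of the boundary inclusion $\overline{\mathcal O}\setminus\mathcal O\subseteq\PP(F_\mu)$. Your Krylov--Bogolyubov construction on $\overline{H_\mu\cdot y}$ gives no control on \emph{which} stationary measure you obtain, so the ``fully-below'' case (the resulting measure living entirely in $\PP(F_\mu)$) is a real possibility that must be excluded. Your proposed fix---``the explicit description of the lift as a weak limit of random-walk empirical measures starting in $\mathcal O$''---does not help: starting in $\mathcal O$ only reproduces the fact that $\supp(\nu)\subseteq\overline{\mathcal O}$, which says nothing about boundary points. What is needed is to start the empirical averages from a point \emph{in the boundary set} $\overline{\mathcal O}\setminus(\mathcal O\cup\PP(F_\mu))$. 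The paper does exactly this: since $\pi(\mathcal O)$ is closed and $H_\mu$-invariant, one checks that $\pi\bigl(\overline{\mathcal O}\setminus(\mathcal O\cup\PP(F_\mu))\bigr)=\pi(\mathcal O)$, so one may choose $y$ in the boundary set with $\pi(y)$ generic for $\overline{\nu}$. Then \cite[Proposition~1.2]{aoun.sert.stationary1} forces $\tfrac{1}{n}\sum_{k=1}^n\mu^k\ast\delta_y\to\nu$; but the closed $H_\mu$-invariant set $\overline{\mathcal O}\setminus\mathcal O$ contains $y$ and hence the whole trajectory, so $\nu(\overline{\mathcal O}\setminus\mathcal O)=1$, contradicting $\nu(\mathcal O)=1$. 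This replaces your two-case split by a single direct argument and closes the gap.
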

We note that since $\PP(F_\mu) \cap \mathcal{O}=\emptyset$, the projection map above is well-defined. Finally, the subspace $F_\mu<V_\mathcal{O}$ can be trivial (equivalently $V_\mathcal{O}$ is critical) in which case (2) above boils down to Corollary \ref{corol.critical.homogene}.

\subsection{Outline of the proof}

We give an outline of the proof of Theorem \ref{thm.main.Lmu=0}. After certain simplifying preliminary reductions (such as restricting, via return times, to a Zariski-connected group and a non-degenerate stationary measure) carried out in \S \ref{section.preliminary}, the proof hinges on proving Theorem \ref{thm.key} which says that the existence of non-degenerate stationary measure implies that every $\Gamma_\mu$-invariant and irreducible subspace $W<V$ with $\lambda_1(W)=\lambda_1(V)$ has a $\Gamma_\mu$-invariant complement. To prove this, we start by decomposing a given stationary measure $\nu$ into random pieces $\nu_b$ living on a (projective) subspace $W_b$ of the most contracting (unstable) Oseledets space for the inverse random walk. Using the criticality assumption, the invariant space $W$ is shown to intersect the space $W_b$, and $W \cap W_b$ has co-dimension at least one in $W_b$. We then go on to construct a (random) idempotent quasi-projective transformation $\pi$ whose image is $W_b$. This key construction uses recurrence of Birkhoff sums (Lemma \ref{lemma.recurrence}) and some observations on random matrix products theory (Lemma \ref{lemma.Vr} $\&$ Lemma \ref{lemma.nondegenerate}). Using this transformation $\pi$, we construct a new (random) semigroup $\mathcal{S}_b$ acting on the space $W_b$ which, on the one hand, reads information from (a Zariski-dense subset of the) original semigroup $\Gamma_\mu$, and on the other hand, preserves the random probability measure $\nu_b$. By a classical result of Furstenberg, the latter property implies that the semigroup $\mathcal{S}_b$ acts on $W_b$ relatively compactly (all this is expressed in Proposition \ref{prop.ellis} which provides a crucial handle to construct a complement to $W$). Now, by relative compactness, we get an $\mathcal{S}_b$-invariant complement to $W\cap W_b$ inside $W_b$. By the aforementioned relation between $\mathcal{S}_b$ and $\Gamma_\mu$, we can then transfer this data to $\Gamma_\mu$. We finally finish the proof of Theorem \ref{thm.key} in an inductive way.

\subsection*{Acknowledgements}
The authors are thankful to Alex Eskin  for helpful remarks.

\section{Preliminary reduction}
\label{section.preliminary}
As mentioned in the introduction, the proof of Theorem \ref{thm.main.Lmu=0} will involve arguments making use of the Zariski topology on $\Gamma_\mu$. It will be more convenient at several places to work with a semigroup $\Gamma_\mu$ having a connected Zariski-closure, and also with stationary measures which are non-degenerate.   Accordingly, our goal in this section is to show that the following Theorem \ref{thm.inter} implies Theorem \ref{thm.main.Lmu=0}.

\begin{theorem}\label{thm.inter}
The conclusion of Theorem \ref{thm.main.Lmu=0} holds if we additionally assume that $\Gamma_\mu$ is Zariski-connected and $\nu$ is ergodic and non-degenerate in $\PP(V)$.  
\end{theorem}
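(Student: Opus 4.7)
The plan is to reduce Theorem~\ref{thm.inter} to the following key statement (advertised in the paper as Theorem~\ref{thm.key}): under the criticality hypothesis, every $\Gamma_\mu$-invariant irreducible subspace $W<V$ admits a $\Gamma_\mu$-invariant complement. Since $\nu$ is non-degenerate, the subspace generated by $\supp\nu$ is all of $V$, so the conclusion of Theorem~\ref{thm.main.Lmu=0} is precisely that $\Gamma_\mu$ acts semisimply on $V$. Assuming the key statement, I would deduce semisimplicity by an induction on $\dim V$: peel off an irreducible $\Gamma_\mu$-invariant subspace $W<V$, use the key statement to write $V=W\oplus V'$ with $V'$ invariant, and apply the induction hypothesis to $V'$ together with the natural projection of $\nu$ to $\PP(V')$. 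Criticality is inherited by $V'$ because $\Gamma_\mu$-invariant subspaces of $V'$ are $\Gamma_\mu$-invariant in $V$, and Zariski-connectedness of $\Gamma_\mu$ and ergodicity/non-degeneracy of the projected measure are either automatic or reducible to the same kind of preliminary cleanup done in \S\ref{section.preliminary}.

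To prove the key statement I would fix an irreducible $\Gamma_\mu$-invariant subspace $W<V$ and work on the one-sided Bernoulli shift $B=\GL(V)^{\N}$ with product measure $\mu^{\otimes \N}$, viewed through the inverse walk $\b=(X_n^{-1})_{n\ge 1}$. Disintegrate $\nu=\int \nu_\b\, d\mu^{\otimes \N}(\b)$; a standard application of Oseledets' theorem for the inverse walk produces a random subspace $W_\b\subseteq V$ --- the most contracting (unstable) Oseledets direction --- on which the mass of $\nu_\b$ concentrates. Because criticality forces the Lyapunov spectrum of the walk restricted to $W$ to sit inside that of $V$ at the top level, a dimension count shows that $W\cap W_\b$ is a proper subspace of $W_\b$ almost surely; this is the point where the hypothesis $\lambda_1(U)=\lambda_1(V)$ for every non-zero invariant $U$ is crucially used.

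The heart of the argument is the construction of a random idempotent quasi-projective transformation $\pi=\pi_\b$ of $V$ with image $W_\b$. I would obtain $\pi$ as a limit point of the normalised random products $L_{n_k}/\|L_{n_k}\|$ along a subsequence $n_k=n_k(\b)$ supplied by the recurrence of Birkhoff sums for the top Lyapunov cocycle (Lemma~\ref{lemma.recurrence}); the non-degeneracy of $\nu$, combined with the random matrix product facts in Lemma~\ref{lemma.Vr} and Lemma~\ref{lemma.nondegenerate}, is what forces $\pi$ to have the correct rank and kernel and ensures $\im(\pi)=W_\b$. Using $\pi$, I would then build a random semigroup $\mathcal{S}_\b\subseteq \Endo(W_\b)$ generated by maps of the form $\pi\circ g\circ \pi|_{W_\b}$ for $g$ ranging over a Zariski-dense subset of $\Gamma_\mu$. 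A direct computation based on stationarity and the construction of $\pi$ shows that $\mathcal{S}_\b$ preserves the random probability measure $\nu_\b$, so Furstenberg's classical invariance lemma, invoked through Proposition~\ref{prop.ellis}, forces $\mathcal{S}_\b$ to have relatively compact image in $\PGL(W_\b)$.

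Relative compactness allows averaging a complementary projection over the closure of $\mathcal{S}_\b$ inside a compact subgroup of $\PGL(W_\b)$, producing an $\mathcal{S}_\b$-invariant complement $U_\b\subseteq W_\b$ of $W\cap W_\b$. The Zariski-density of the subset of $\Gamma_\mu$ encoded by $\mathcal{S}_\b$ then lets me transport the $\mathcal{S}_\b$-invariance of $U_\b$ back to $\Gamma_\mu$, yielding a $\Gamma_\mu$-invariant subspace of $V$ that sums with $W$ to a strictly larger $\Gamma_\mu$-invariant subspace. Iterating this construction on the codimension defect produces a full $\Gamma_\mu$-invariant complement to $W$, which finishes the key statement and hence Theorem~\ref{thm.inter}. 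The main obstacle, and the technically delicate core of the argument, is the construction of $\pi$ together with the verification that $\mathcal{S}_\b$ reads off a Zariski-dense portion of $\Gamma_\mu$ --- only then can the compact-group averaging on $W_\b$ be transferred back to a genuine algebraic complement of $W$ inside $V$, and it is precisely here that the finite first moment, the recurrence of Birkhoff sums, and the non-degeneracy of $\nu$ must combine in a nontrivial way.
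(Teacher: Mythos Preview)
Your proposal is correct and follows the paper's approach: the sketch of Theorem~\ref{thm.key} via the random idempotent $\pi$, the sandwich semigroup $\mathcal{S}_b=\langle \pi\Gamma_\mu\pi\rangle$, Furstenberg's compactness lemma (packaged as Proposition~\ref{prop.ellis}), and the inductive completion is exactly the route the paper takes. The only minor deviation is in the deduction of Theorem~\ref{thm.inter} from Theorem~\ref{thm.key}: you induct by projecting $\nu$ onto a complement $V'$ and re-applying the key statement there, whereas the paper applies Theorem~\ref{thm.key} once to $V$ --- which already asserts that \emph{every} irreducible invariant subspace of $V$ has an invariant complement --- and then simply invokes the elementary representation-theoretic fact that this is equivalent to complete reducibility, so no projected measure or additional cleanup is required.
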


Let us first fix once for all our notation and recall standard definitions and notions used above and in all the paper. All probability measures will be understood as Borel probability measures and all vector spaces will be real or complex. Given a probability measure $\mu$ on $\GL(V)$, denote by $\Gamma_\mu$ the closed semigroup of $\GL(V)$ generated by the support of $\mu$. Recall that a probability measure $\nu$ on the projective space $\PP(V)$ of $V$ is said to be $\mu$-\emph{stationary} if for every continuous function $f$ on $\PP(V)$, $\int{f dv}=\iint{f(g [x]) d\mu(g) d\nu([x])}$ where $[x]$ denotes the projection of a non-zero vector $x$ of $V$ on the projective space $\PP(V)$. It is said to be $\mu$-ergodic if it is extremal in the convex set of $\mu$-stationary probability measures on $\PP(V)$. We say that $\nu$ is \emph{non-degenerate in $PP(V)$} if $\nu(\PP(W))=0$ for any proper projective subspace $\PP(W)$ of $\PP(V)$. We denote by $V_\nu$ the linear subspace of $V$ generated by the support of $\nu$.

We recall now  standard facts about linear algebraic groups. A linear algebraic group $G$ is a subgroup of $\GL(V)$  for some finite-dimensional vector space $V$ which is closed for the Zariski topology, i.e.~ it is a subgroup of $\GL(V)$ such that the matrix entries of all elements of $\GL(V)$ satisfy a set of polynomial equations. A standard fact is that the Zariski-closure of a semigroup $\Gamma<\GL(V)$ is a linear algebraic group. We will denote by $G^o$ the connected component of $G$ for the Zariski-topology; it is an algebraic normal subgroup of $G$ of finite index. Recall finally that for linear algebraic groups  Zariski-connectedness is equivalent to (Zariski-)topological irreducibility (i.e.~ the intersection of any two open non-empty subsets is non-empty). Given a probability measure $\mu$ on $\GL(V)$, we denote by $H_\mu$ the Zariski-closure of  $\Gamma_\mu$.

All the random variables we consider will be defined on the probability space $(B,\mathcal{A},\beta)$ where  $B=\GL(V)^\N$, $\mathcal{A}$ the product Borel $\sigma$-algebra, and $\beta=\mu^{\otimes \N}$.
Given   a probability measure $\mu$ on $\GL(V)$ and $k \in \N$, we denote by $\mu^k$ the $k$-fold convolution $\mu \ast \cdots \ast \mu$ which is the distribution of the random variable $b=(b_i)_{i\in \N}\mapsto b_k \cdots b_1$. For convenience we put $\mu^0=\delta_{\textrm{id}}$. We denote by $\tau: B\to \N$, the stopping time defined by $b\mapsto \tau(b):=\inf\{k\in \N : b_k\cdots b_1\in H_\mu^o\}$  and by $\mu^\tau$ the distribution of the random variable $b\mapsto b_{\tau}\cdots b_1$. Note that $\tau$ is the hitting time of a state for the Markov chain given by the $\mu$-random walk on the finite group $H_\mu/H_\mu^o$. This Markov chain is irreducible since, by Zariski-density, $\Gamma_\mu$ surjects onto $H_\mu/H_\mu^o$, therefore $\tau$ is almost-surely finite.

In the proof of the implication Theorem \ref{thm.inter} $\implies$ Theorem \ref{thm.main.Lmu=0}, we will make use of the decomposition of $\nu$ given by the following result.

\begin{lemma}\label{lemma.finite}
Let $\nu$ be a $\mu$-stationary ergodic probability measure on $\PP(V)$. Then there exist finitely many probability measures $\nu_1,\cdots, \nu_k$ on $\PP(V)$, where $k:=[H_\mu : H_\mu^o]$,  such that 
\begin{itemize}
\item  $\nu=\frac{1}{k}\sum_{i=1}^k{\nu_i}$ 
\item  Each $\nu_i$ is a $\mu^{\tau}$-stationary ergodic probability  measure which is moreover non-degenerate in the projective subspace $V_{\nu_i}$ generated by its support.  
\end{itemize}

\end{lemma}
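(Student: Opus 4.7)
The plan is to construct the $\nu_i$'s as the fiberwise disintegrations of a $\mu$-ergodic lift of $\nu$ to the skew product $X := \PP(V) \times H_\mu/H_\mu^o$, then transfer stationarity, ergodicity, and non-degeneracy via a Kakutani tower argument. Set $\bar G := H_\mu/H_\mu^o$, a finite group of order $k$. By Zariski-density of $\Gamma_\mu$ in $H_\mu$ and Zariski-openness of $H_\mu^o$, the semigroup $\Gamma_\mu$ surjects onto $\bar G$; hence the pushforward $\bar\mu$ of $\mu$ to $\bar G$ induces an irreducible Markov chain with unique (uniform) stationary measure $\pi$. On $X$, endowed with the $\mu$-action $g\cdot(x, c) = (gx, gc)$, the product $\nu \otimes \pi$ is $\mu$-stationary (using left-invariance of $\pi$), so the convex compact set $\mathcal{M}_\mu(X \mid \nu)$ of $\mu$-stationary lifts of $\nu$ is non-empty. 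I pick $\tilde\nu$ to be an extreme point of this set: the $\mu$-ergodicity of $\nu$ (any $\mu$-decomposition of $\tilde\nu$ projects to one of $\nu$, which is trivial, and so the decomposing measures lie back in $\mathcal{M}_\mu(X\mid\nu)$) forces $\tilde\nu$ to be $\mu$-ergodic on $X$; its $\bar G$-marginal is then $\pi$ by uniqueness. Disintegrating along the projection to $\bar G$ yields $\tilde\nu = \frac{1}{k}\sum_{i=1}^k \nu_i \otimes \delta_{g_iH_\mu^o}$, giving $\nu = \frac{1}{k}\sum_i \nu_i$.

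For $\mu^\tau$-stationarity of each $\nu_i$, I apply optional stopping to the measure-valued martingale $Z_n := (b_n\cdots b_1)_*\tilde\nu$ (which is a martingale by $\mu$-stationarity of $\tilde\nu$), obtaining $\mu^\tau \ast \tilde\nu = \tilde\nu$. Since $H_\mu^o$ is a normal subgroup of $H_\mu$, every element in the support of $\mu^\tau$ fixes each coset $g_iH_\mu^o$, so the pushforward respects the disintegration and yields $\mu^\tau\ast\nu_i = \nu_i$. For $\mu^\tau$-ergodicity, I view the $\mu$-system on $X$ as a Kakutani skyscraper over the base $\PP(V)\times\{g_iH_\mu^o\}$ with return time $\tau$ (the same in each coset by normality of $H_\mu^o$): the induced transformation on the base is precisely the $\mu^\tau$-random walk on $\PP(V)$, so Kakutani's tower theorem transfers $\mu$-ergodicity of $\tilde\nu$ to $\mu^\tau$-ergodicity of each $\nu_i$.

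Finally, non-degeneracy of $\nu_i$ in $V_{\nu_i}$ combines ergodicity with the Zariski-connectedness of $H_\mu^o$. If some $W \subsetneq V_{\nu_i}$ satisfied $\nu_i(\PP(W))>0$, then selecting the minimal such dimension $d_0$ and the maximal mass $\alpha$ among $d_0$-dimensional subspaces produces a finite set $\mathcal{W}_\alpha \subset \Gr_{d_0}(V)$; $\mu^\tau$-stationarity together with ergodicity forces $\nu_i$ to be concentrated on the union of the corresponding projective subspaces and forces $\mathcal{W}_\alpha$ to be a single $\Gamma_{\mu^\tau}$-orbit. Here one needs that $\Gamma_{\mu^\tau}$ is Zariski-dense in $H_\mu^o$: the partition $\Gamma_\mu = \bigsqcup_i (\Gamma_\mu \cap g_iH_\mu^o)$ together with Zariski-density of $\Gamma_\mu$ in $H_\mu$ forces $\Gamma_\mu \cap H_\mu^o$ to be Zariski-dense in $H_\mu^o$, and splitting words at successive returns to $H_\mu^o$ identifies this intersection with $\Gamma_{\mu^\tau}$. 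Then Zariski-connectedness of $H_\mu^o$ forces its orbit on the finite set $\mathcal{W}_\alpha$ to be a singleton, so $\mathcal{W}_\alpha = \{W\}$ and $\nu_i(\PP(W))=1$, forcing $V_{\nu_i}\subseteq W$ and contradicting $W \subsetneq V_{\nu_i}$. The main obstacle in this plan is the correct identification of the induced Kakutani dynamics with the $\mu^\tau$-walk and the careful use of Zariski-connectedness in the non-degeneracy step; each individual step then reduces to a standard fact once framed correctly.
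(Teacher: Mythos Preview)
Your proof is correct and follows essentially the same route as the paper: lift $\nu$ to an ergodic $\mu$-stationary measure on $X=\PP(V)\times H_\mu/H_\mu^o$, disintegrate over the finite group to obtain the $\nu_i$, identify the induced chain on each fiber with the $\mu^\tau$-walk to get stationarity and ergodicity, and then run the Furstenberg minimal-subspace argument combined with Zariski-connectedness of $H_\mu^o$ for non-degeneracy. The only cosmetic difference is in how the ergodic lift on $X$ is produced: the paper builds it as a weak limit of Ces\`aro averages $\frac{1}{n}\sum \mu^i\ast\delta_{(H_\mu^o,x)}$ from a $\nu$-generic point and then invokes \cite[Lemma~3.4]{BQ-III} for the induced-chain step, whereas you pick an extreme point of the convex set of stationary lifts and argue optional stopping plus Kakutani directly; both yield the same object and the same conclusions.
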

\begin{proof}
Let $g_1,\cdots, g_k\in H_\mu$ be such that  $H_\mu/H_\mu^o=\{g_i H_\mu^o : i=1,\cdots, k\}$. The group $H_\mu$ acts diagonally on $X:=H_\mu/H_\mu^o \times \PP(V)$; so that the $\mu$-random walk on $H_\mu$ induces a Markov chain on $X$. Namely the $n$th step of this Markov chain starting at $(g H_\mu^o, x)$ is $(b_n\cdots b_1 g H_\mu^{\circ}, b_n\cdots b_1 x)$.
For every $i\in \{1,\cdots, k\}$, denote $Y_i:=\{g_i H_\mu^{\circ}\}\times \PP(V)$. Since $H_\mu^o$ is normal in $H_\mu$, $H_\mu^o$ stabilizes each $Y_i$. 
Note that each  $Y_i$ is a  recurrent  subset of $X$  thanks to the irreducibility of the Markov chain on $H_\mu/H_\mu^o$. Moreover, for every $i\in \{1,\cdots, k\}$, the Markov chain on $Y_i$ induced by the Markov chain on $X$ projects on $\PP(V)$ to the $\mu^{\tau}$-Markov chain on $\PP(V)$.

By Chacon--Ornstein theorem, for $\nu$-a.e.~ every $x\in \PP(V)$, the sequence of probability measures $\frac{1}{n}\sum_{i=1}^n{\mu^i \ast \delta_x}$ converges weakly to $\nu$. Let $x$ be such a $
\nu$-generic point of $\PP(V)$.  
By compactness of $X$, the sequence of probability measures $\frac{1}{n}\sum_{i=1}^n{\mu^i \ast \delta_{(H_\mu^o, x)}}$ has a limit point $\eta$ which is a stationary probability measure for the Markov chain on $X$. Its projection on $H_\mu/H_\mu^o$ is $\mu$-stationary; it is then the uniform probability measure on $H_\mu/H_\mu^o$. Disintegrating $\eta$ over $H_\mu/H_\mu^o$, we get that 
$\eta=\frac{1}{k}\sum_{i=1}^k{ \delta_{g_i H_\mu^o}\otimes  \nu_i}$ where each $\nu_i$ is a probability measure on $\PP(V)$. In particular, $\nu=\frac{1}{k}\sum_{i=1}^k{\nu_i}$ and the restriction of $\nu$ to $Y_i$ is $\nu_i$.
Now it follows from \cite[Lemma 3.4]{BQ-III} that $\nu_i$ is stationary for the Markov chain restricted to $Y_i$, and that it is ergodic because $\nu$ is ergodic for the Markov chain on $X$. We deduce that each $\nu_i$ is $\mu^\tau$-stationary and ergodic. The fact that each $\nu_i$ is non-degenerate in $\PP(V_{\nu_i})$ is a general fact that we recall in Lemma \ref{lemma.connected.proper} below. 
\end{proof}

\begin{lemma}\label{lemma.connected.proper}
Let $\mu$ be a probability measure on $\GL(V)$ such that $\Gamma_\mu$ is Zariski-connected. Then each $\mu$-stationary  ergodic probability measure $\nu$ on $\PP(V)$ is non-degenerate in $\PP(V_{\nu})$. 
\end{lemma}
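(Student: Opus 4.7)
The plan is to argue by contradiction, following a classical Furstenberg-type pigeonhole argument, with Zariski-connectedness used at the end to trivialize a finite permutation action. First I reduce to the case $V_\nu = V$: since $\nu$ is $\mu$-stationary, the identity $1 = \nu(\PP(V_\nu)) = \int \nu(\PP(g^{-1}V_\nu))\,d\mu(g)$ forces $gV_\nu \subseteq V_\nu$ for $\mu$-a.e.\ $g$, and invertibility together with finite-dimensionality gives equality; since $\{g : gV_\nu = V_\nu\}$ is closed in $\GL(V)$, it contains $\supp(\mu)$ and hence $\Gamma_\mu$. Restricting to $V_\nu$, the task becomes showing $\nu(\PP(W)) = 0$ for every proper subspace $W \subsetneq V$.

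Suppose this fails. Let $p$ be the minimal dimension of a proper subspace of $V$ with positive $\nu$-mass, and put $\alpha := \sup\{\nu(\PP(W)) : \dim W = p\}$; upper semicontinuity of $W \mapsto \nu(\PP(W))$ on the compact Grassmannian $\Gr_p(V)$ ensures $\alpha > 0$ is attained. Let $\mathcal{F}$ be the (nonempty) set of realisers. For distinct $W_1, W_2 \in \mathcal{F}$, $\dim(W_1 \cap W_2) < p$ gives $\nu(\PP(W_1) \cap \PP(W_2)) = \nu(\PP(W_1 \cap W_2)) = 0$ by minimality of $p$, and summing over $\mathcal{F}$ forces $|\mathcal{F}| \leq 1/\alpha < \infty$.

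Now stationarity and maximality combine to produce a $\Gamma_\mu$-action on $\mathcal{F}$: for each $W \in \mathcal{F}$, the identity $\alpha = \int \nu(\PP(g^{-1}W))\,d\mu(g)$ together with the upper bound $\nu(\PP(g^{-1}W)) \leq \alpha$ forces $g^{-1}W \in \mathcal{F}$ for $\mu$-a.e.\ $g$, and finiteness of $\mathcal{F}$ upgrades this to $g\mathcal{F} = \mathcal{F}$ for $\mu$-a.e.\ $g$. Thus $\supp(\mu)$ lies in the stabilizer $S := \{g \in \GL(V) : g\mathcal{F} = \mathcal{F}\}$, which is Zariski-closed as the stabilizer of the Zariski-closed (finite) subset $\mathcal{F} \subset \Gr_p(V)$ under the algebraic $\GL(V)$-action. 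Hence $H_\mu \subseteq S$, and the induced morphism $H_\mu \to \operatorname{Sym}(\mathcal{F})$ from a Zariski-connected group into a finite group must be trivial. Consequently every $W \in \mathcal{F}$ is $\Gamma_\mu$-invariant.

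For any such $W$, the set $\PP(W)$ is a $\Gamma_\mu$-invariant closed subset of $\PP(V)$, so the normalized restriction of $\nu$ to $\PP(W)$ is itself $\mu$-stationary; ergodicity of $\nu$ then forces $\nu(\PP(W)) \in \{0,1\}$. Since $\nu(\PP(W)) = \alpha > 0$ we get $\nu(\PP(W)) = 1$, whence $V_\nu \subseteq W \subsetneq V = V_\nu$, a contradiction. The delicate step, and the one that consumes the connectedness hypothesis, is the passage from ``$\supp(\mu) \subseteq S$'' to triviality of the finite-valued morphism $H_\mu \to \operatorname{Sym}(\mathcal{F})$; the statement genuinely fails without Zariski-connectedness, as for $\mu = \delta_g$ on $\R^2$ with $g$ the coordinate-swap, where $\tfrac12(\delta_{[1:0]}+\delta_{[0:1]})$ is $\mu$-stationary, $\mu$-ergodic and has $V_\nu = \R^2$, yet is degenerate in $\PP(\R^2)$.
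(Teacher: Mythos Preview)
Your proof is correct and follows essentially the same approach as the paper's: both use the classical Furstenberg argument to produce a finite $\Gamma_\mu$-orbit of proper subspaces charged by $\nu$, then invoke Zariski-connectedness to trivialize the permutation action and conclude by ergodicity. The paper compresses the Furstenberg step into a citation to Bougerol--Lacroix, whereas you unpack it explicitly (minimal dimension, maximal mass, finiteness of $\mathcal{F}$), but the skeleton and the use of the hypothesis are identical.
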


\begin{proof}
By standard arguments due to Furstenberg (see for instance the proof of \cite[Proposition 2.3]{bougerol.lacroix}), one can find a subspace $W$ of $V$ charged by $\nu$, of minimal dimension for this property, such that 
the orbit $\Lambda$ of $W$ by $\Gamma_{\mu}$  is finite. The set of elements $g\in \GL(V)$ that stabilize $\Lambda$ being Zariski-closed, we deduce that $H_\mu$ stabilizes also $\Lambda$.  Since $H_\mu$ is Zariski-connected, it must stabilize each  element of $\Lambda$ (otherwise it would have a proper finite index algebraic subgroup) so that $\Lambda=\{W\}$. By ergodicity of $\nu$, $V_{\nu}\subset W$. We conclude   by minimality of $W$. 
\end{proof}

\begin{proof}[Proof of Theorem \ref{thm.main.Lmu=0} (using Theorem \ref{thm.inter})]

\textbullet ${}$  (Ergodicity) Suppose each ergodic component $\nu_e$ of $\nu$ has the property that $V_{\nu_e}$ is completely reducible. Then, $V_\nu$ is completely reducible. Indeed, by finite-dimensionality,  $V_\nu$ is a sum of finitely many $V_{\nu_e}$'s. But a sum of completely reducible spaces is completely reducible.\\  
\indent \textbullet ${}$ (Non-degenerate $\nu$ and Zariski-connectedness) Given an ergodic $\mu$-stationary probability $\nu$, applying Lemma \ref{lemma.finite}, we obtain finitely many $\mu^\tau$-stationary and ergodic measures $\nu_1,\ldots,\nu_k$, each non-degenerate in projective subspace generated by its support and where $k=[H_\mu:H_\mu^o]$ and $\nu=\frac{1}{k}\sum_{i=1}^k\nu_i$. Therefore, $V_\nu=V_{\nu_1}+ \ldots + V_{\nu_k}$. We now claim that
\begin{enumerate}
\item $\overline{\Gamma}^Z_{\mu^\tau}=H_\mu^o$.
\item $\mu^{\tau}$ has finite first moment.
\item The measure $\mu^\tau$ has the property that every $\Gamma_{\mu^{\tau}}$-invariant subspace has the top Lyapunov exponent $\lambda_1(\mu^\tau)$.
\end{enumerate}
Once these claims are established, it follows from Theorem \ref{thm.inter} that each $V_{\nu_i}$ is $H_\mu^o$-completely reducible and hence $V_\nu$ is $H_\mu^o$-completely reducible. Since we are working in characteristic zero, the complete reducibility of $V_{\nu}$ as $H_\mu$-space is equivalent to its complete reducibility as $H_\mu^o$-space 
(see e.g.~ \cite[Lemma 3.1]{mostow.fully.reducible}).
    
It remains therefore to prove Claims (1), (2), and (3) above. For (1), it is enough to show that $\Gamma_{\mu^{\tau}}=\Gamma_{\mu}\cap H_\mu^o$. The inclusion $\subset$ is trivial. For the other inclusion, observe that $\Gamma_{\mu}\cap H_\mu^o$ is open in $\Gamma_\mu$ because $H_\mu^o$ has finite index in $H_\mu$. Hence for every   $g\in  \Gamma_{\mu}\cap H_\mu^o$ and every neighborhood $O$ of $g$, we have $\beta(\limsup \{b_n\cdots b_1\in O\})>0$ and hence $\beta(\limsup\{b_{\tau(n)}\cdots b_1\in O\})>0$. Hence   $g\in \Gamma_{\mu^{\tau}}$.\\[3pt]
For (2), this is showed in \cite[Corollary 5.6]{BQ.book}. 
\\[3pt]
Finally, we show (3). 
By \cite{furstenberg.kifer,hennion}, the assumption on $\mu$ is equivalent to saying that for every non-zero vector $x$ of $V$ we have  
$\lim_{n\to \infty}{\frac{1}{n}\log \|b_n\cdots b_1 x\|}= \lambda_1(\mu)$. By the law of large numbers, this implies that for every non-zero vector $x$ of $V$, almost surely $\lim_{n\to \infty}{\frac{1}{n}\log \|b_{\tau(n)}\cdots b_1 x}\|= \E(\tau)\lambda_1(\mu)$, where $\tau(n)$ is defined inductively by $\tau(n):=\tau(n-1) +\tau\circ \theta^{\tau(n-1)}$ and $\tau(0)=0$. Again, by  \cite{furstenberg.kifer,hennion}, this implies the desired property for $\mu^{\tau}$. 
\end{proof}

\section{Non-degenerate stationary probability measures on the projective space}

The goal of this section is to deduce some consequences of the existence of a non-degenerate $\mu$-stationary probability measure on the $\mu$-random walk and on the structure of the semigroup $\Gamma_\mu$ generated by the support of $\mu$. These are expressed in the following result. 

\begin{theorem}\label{thm.key}
Let $\mu$ be a probability measure on $\GL(V)$ with finite first moment and $\nu$ a non-degenerate $\mu$-stationary probability measure on $\PP(V)$. Suppose that $H_\mu$ is Zariski-connected. Then every irreducible $H_\mu$-subspace $W$ of $V$ such that $\lambda_1(W)=\lambda_1(V)$ admits a $H_\mu$-invariant complement in $V$. 
\end{theorem}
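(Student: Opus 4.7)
The plan is to follow the strategy outlined in the introduction of the paper. First, I would construct, for $\beta$-almost every trajectory $b \in B$, a random ``attracting subspace'' $W_b \subset V_\nu$ associated to the top (most expanding) Oseledets space of the inverse random walk $(b_n^{-1}\cdots b_1^{-1})$ restricted to $V_\nu$. Simultaneously, disintegrate the stationary measure $\nu$ along the shift to obtain a system of random conditional probability measures $\nu_b$ on $\PP(V)$; using the contraction of the inverse walk on the complementary Oseledets subspaces and the non-degeneracy of $\nu$ in $\PP(V)$, one shows that each $\nu_b$ is supported on $\PP(W_b)$ and is non-degenerate there.

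Second, I would use the criticality hypothesis $\lambda_1(W)=\lambda_1(V)$ together with the $\Gamma_\mu$-invariance and irreducibility of $W$ to locate $W$ relative to $W_b$. On the one hand, since $W$ is $\Gamma_\mu$-invariant, $W\cap W_b$ is invariant under the dynamics relevant to $W_b$; on the other hand, $W_b$ cannot be contained in $W$, for otherwise $\nu_b$ would live on $\PP(W)$ and, after integration, $\nu$ would charge $\PP(W)\subsetneq \PP(V)$, contradicting non-degeneracy. Hence $W\cap W_b$ is a proper subspace of $W_b$, of codimension at least one.

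Third, the core technical step is to construct a random idempotent quasi-projective transformation $\pi_b$ with image $\PP(W_b)$. I would use the recurrence of Birkhoff sums for the cocycle $\log\|\cdot\|$ to extract, along a random subsequence $n_k$, an appropriately normalized limit of $b_{n_k}^{-1}\cdots b_1^{-1}$ that projects $V$ onto $W_b$ along a complementary Oseledets sum. The auxiliary facts from random matrix product theory mentioned in the outline (the lemmas on $V_r$ and on non-degeneracy) should provide the correct normalization ensuring that $\pi_b$ is a non-trivial quasi-projective idempotent. Using $\pi_b$, I would then define a random semigroup $\mathcal{S}_b$ acting on $W_b$ by ``conjugating'' a Zariski-dense subset of $\Gamma_\mu$ through $\pi_b$, arranging things so that $\mathcal{S}_b$ preserves $\nu_b$. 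A classical Furstenberg-type compactness statement then forces the image of $\mathcal{S}_b$ in $\PGL(W_b)$ to be relatively compact.

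Finally, the relative compactness of $\mathcal{S}_b$ in $\PGL(W_b)$ implies that the $\mathcal{S}_b$-action on $W_b$ is semisimple, so there is an $\mathcal{S}_b$-invariant complement $U_b$ to $W\cap W_b$ inside $W_b$. To close the argument, I would transfer this complement back to the $H_\mu$-action: the Zariski-dense trace of $\Gamma_\mu$ inside $\mathcal{S}_b$ combined with the Zariski-connectedness of $H_\mu$ should promote $U_b$ to an $H_\mu$-invariant complement of $W\cap W_b$ inside $W_b$, and then an induction on $\dim V$, applied to the quotient dynamics on $V/W_b$ (or, more likely, applied while restricting attention to an invariant subspace of strictly smaller dimension), would yield the desired $H_\mu$-invariant complement of $W$ in $V$. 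The main obstacle I expect is the construction of $\pi_b$: it must have the right image and kernel, yet also enough ``algebraic fidelity'' so that the conjugated semigroup $\mathcal{S}_b$ simultaneously preserves $\nu_b$ and retains enough information about $\Gamma_\mu$ for the complement-transfer step to go through.
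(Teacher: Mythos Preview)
Your plan tracks the paper's outline closely, and the first three stages are essentially what the paper carries out (two small corrections: the relevant products are the reversed-order products $b_1\cdots b_n$, not the inverse walk $b_n^{-1}\cdots b_1^{-1}$; and $W_b$ is taken to be the span of $\supp(\nu_b)$, i.e.\ $\im(\pi)$ for any $\pi$ with $\pi\nu=\nu_b$, rather than an Oseledets space --- the paper never invokes Oseledets here). The genuine gap is in your final ``transfer and induction'' step.

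The subspace $W_b=\im(\pi)$ is random and is \emph{not} $H_\mu$-invariant, so one cannot ``promote $U_b$ to an $H_\mu$-invariant complement of $W\cap W_b$ inside $W_b$''. The paper's move is more indirect. From the $\mathcal S_b$-invariance of $U_b$ one has $\pi\gamma\pi(U_b)\subset U_b$ for every $\gamma$ in a non-empty Zariski-open $O\subset H_\mu$; since $\pi$ is idempotent this reads $\gamma U_b\subset \pi^{-1}(U_b)$ for $\gamma\in O\cap\Gamma_\mu$, and by Zariski-density together with connectedness of $H_\mu$ the containment extends to all $\gamma\in H_\mu$. Setting $E:=\operatorname{Span}\{g\,U_b:g\in H_\mu\}$ produces a \emph{deterministic} $H_\mu$-invariant subspace with $E\subset\pi^{-1}(U_b)$. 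Since $\pi(W)\neq 0$ and $\pi(W)\cap U_b=0$, this forces $W\not\subset E$, hence $W\cap E=\{0\}$ by irreducibility of $W$. Note that $E$ need \emph{not} be a complement of $W$: this first step only yields a non-zero $H_\mu$-invariant subspace in direct sum with $W$.

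Accordingly, the induction is not on $V/W_b$ (which is not even well-posed, $W_b$ being random and non-invariant) but on $V/E$: the projection of $\nu$ to $\PP(V/E)$ is again non-degenerate and $\mu$-stationary, the image $q(W)$ in $V/E$ is still irreducible with $\lambda_1(q(W))=\lambda_1(V/E)$, and $\dim(V/E)<\dim V$. The induction hypothesis then gives a complement of $q(W)$ in $V/E$, whose preimage in $V$ is the sought $H_\mu$-invariant complement of $W$.
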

 

Before proceeding with the proof, note that when $\nu$ is a non-degenerate probability measure on $\PP(V)$ and $\pi$ is a non-zero endomorphism of $V$, since $\nu(\PP(\ker\pi))=0$, the pushforward $\pi \nu$ is a well-defined probability measure on $\PP(V)$. 
Moreover, by non-degeneracy, it is easy to see that  
\begin{equation}\label{eq.non.deg}\langle \supp(\pi \nu)\rangle=\PP(\im(\pi)),
\end{equation}
where we denoted by $\langle \supp(\pi \nu) \rangle$ the projective subspace generated by the support of $\pi \nu$. 

In the sequel, we will repeatedly use the following  lemma. It follows at once from dominated convergence theorem.
 


\begin{lemma}\label{lemma.dominated1}
Let $V$ be a vector space, $\nu$ a non-degenerate probability measure on $\PP(V)$, and $h_n$ a sequence in $\Endo(V)$ that converges to some non-zero $h\in \Endo(V)$. Then $  h_n \nu \underset{n\to +\infty}{\longrightarrow}  h \nu$ weakly.  \qed
\end{lemma}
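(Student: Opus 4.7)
The plan is essentially the one signposted by the author: reduce weak convergence to pointwise convergence of test functions and invoke the bounded convergence theorem. First I would verify that all the pushforward measures $h_n\nu$ and $h\nu$ are well-defined as genuine probability measures on $\PP(V)$. Since $h \neq 0$ and $h_n \to h$ in $\Endo(V)$, one has $h_n \neq 0$ for all $n$ sufficiently large. Consequently $\ker h$ and (eventually) $\ker h_n$ are proper subspaces of $V$, so non-degeneracy of $\nu$ gives $\nu(\PP(\ker h)) = 0$ and $\nu(\PP(\ker h_n))=0$; hence the rational maps $[x] \mapsto [hx]$ and $[x]\mapsto [h_n x]$ are defined $\nu$-almost everywhere and the associated pushforwards make sense. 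Dropping the finitely many initial $n$ for which $h_n$ might vanish does not affect weak convergence.

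Next I would fix an arbitrary continuous test function $f \in C(\PP(V))$ and write
$$\int f \, d(h_n\nu) \,=\, \int_{\PP(V)\setminus \PP(\ker h_n)} f([h_n x])\, d\nu([x]).$$
The pointwise statement I need is: for $\nu$-a.e. $[x]\in \PP(V)$, $f([h_n x]) \to f([hx])$. Indeed, by non-degeneracy, $\nu$-a.e. $[x]$ satisfies $hx \neq 0$; since $h_n \to h$ in $\Endo(V)$ (an operator-norm convergence), $h_n x \to hx$ in $V$, and in particular $h_n x \neq 0$ eventually; by continuity of the projectivization $V\setminus\{0\} \to \PP(V)$, we obtain $[h_n x]\to [hx]$ in $\PP(V)$; finally continuity of $f$ yields $f([h_n x])\to f([hx])$.

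To conclude, I would apply the dominated convergence theorem with the constant dominating function $\|f\|_\infty$ (finite because $f$ is continuous on the compact space $\PP(V)$). This gives $\int f\, d(h_n \nu) \to \int f\, d(h\nu)$, and since $f\in C(\PP(V))$ was arbitrary, $h_n\nu \to h\nu$ weakly. I do not anticipate a main obstacle: the only place where the hypotheses are genuinely used is the non-degeneracy of $\nu$, which ensures both that the pushforwards are defined and that the pointwise convergence holds $\nu$-almost surely; everything else is the standard bounded-convergence argument.
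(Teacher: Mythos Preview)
Your proof is correct and is exactly the approach the paper has in mind: the authors simply state that the lemma ``follows at once from the dominated convergence theorem'' and omit the details, which are precisely the ones you spelled out.
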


\subsection{Consequences on the random walk}
The main outputs of this subsection are Lemma \ref{lemma.Vr} and Lemma \ref{lemma.nondegenerate}. 

We start by recalling a fundamental result of Furstenberg \cite{furstenberg.noncommuting} and Guivarc'h--Raugi \cite{guivarc.raugi.contraction} (see also \cite[II, Lemma 2.1]{bougerol.lacroix}):  for $\beta \otimes (\sum_{i=0}^{\infty}{2^{-i-1}\mu^i})$-almost every $(b,g)\in B\times \Gamma_{\mu}$, 
\begin{equation}\label{eq.martingale}b_1\cdots b_n g \nu \overset{\textrm{weakly}}{\to} \nu_b\end{equation} where $\nu_b$ is a probability measure on $\PP(V)$ such that   $\int{\nu_b d\beta(b)}=\nu$. 

An immediate consequence is the following.

\begin{lemma}\label{lemma.Vr}
Let $\mu$ be a probability measure on $\GL(V)$, $W$ a $\Gamma_\mu$-invariant subspace of $V$, and $\nu$ a non-degenerate $\mu$-stationary probability measure on $\PP(V)$. Then for $\beta$-almost every $b\in B$, every limit point $\pi_b$ of $\frac{b_1\cdots b_n}{\|b_1\cdots b_n\|}$ satisfies $\im(\pi_b)\not\subset W$. Moreover, if $\mu$ has finite first moment, then $\lambda_1(V/W) \geq \lambda_1(W)$ (or equivalently $\lambda_1(V/W)=\lambda_1(V)$). 
\end{lemma}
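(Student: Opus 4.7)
The plan for the first assertion is to argue by contradiction, combining the Furstenberg--Guivarc'h--Raugi martingale convergence \eqref{eq.martingale} with Lemma \ref{lemma.dominated1}. I may assume $W \subsetneq V$, else the claim is vacuous. Suppose there is a set $A\subseteq B$ with $\beta(A)>0$ on which some cluster point $\pi_b$ of $\frac{b_1\cdots b_n}{\|b_1\cdots b_n\|}$ in $\Endo(V)$ has image contained in $W$. For each such $b$, extract a subsequence $(n_k)$ along which $\frac{b_1\cdots b_{n_k}}{\|b_1\cdots b_{n_k}\|}\to \pi_b$, and, by Fubini and \eqref{eq.martingale}, fix $g\in\Gamma_\mu$ such that $b_1\cdots b_n g\,\nu\to \nu_b$ weakly on $\PP(V)$. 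Since $g\in \GL(V)$ and $\|\pi_b\|=1$, the endomorphism $\pi_b g$ is non-zero, so Lemma \ref{lemma.dominated1} applied to the convergent sequence $\frac{b_1\cdots b_{n_k} g}{\|b_1\cdots b_{n_k}\|}\to \pi_b g$ in $\Endo(V)$ yields $\frac{b_1\cdots b_{n_k} g}{\|b_1\cdots b_{n_k}\|}\,\nu\to (\pi_b g)\nu$ weakly on $\PP(V)$. Because positive scalings act trivially in projective coordinates, the left-hand sequence coincides with $b_1\cdots b_{n_k} g\,\nu$, whose limit is $\nu_b$ by \eqref{eq.martingale}. Identifying limits gives $\nu_b = (\pi_b g)\nu$, a measure supported on $\PP(\im \pi_b)\subseteq \PP(W)$. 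Integrating over $b\in A$ via $\nu=\int \nu_b\,d\beta$, I obtain $\nu(\PP(W))\geq \beta(A)>0$, contradicting non-degeneracy of $\nu$.

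For the Lyapunov inequality, the plan is to use the first assertion to transport the asymptotic growth rate of $\|b_1\cdots b_n\|$ to the quotient $V/W$. The finite first moment hypothesis passes to the induced measure $\bar\mu$ on $\GL(V/W)$, so Furstenberg--Kesten (or Kingman) gives $\frac{1}{n}\log\|b_1\cdots b_n\|\to \lambda_1(V)$ and $\frac{1}{n}\log\|\bar b_1\cdots \bar b_n\|\to \lambda_1(V/W)$ almost surely, where $\bar b_i$ denotes the endomorphism induced on $V/W$. Fix $b$ in the intersection of these full-measure sets with the one produced by the first assertion, and extract by compactness a subsequence $(n_k)$ with $\frac{b_1\cdots b_{n_k}}{\|b_1\cdots b_{n_k}\|}\to \pi_b$, $\|\pi_b\|=1$. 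Writing $q:V\to V/W$ for the canonical projection, the first assertion yields $\bar\pi_b:= q\circ \pi_b\neq 0$. From the intertwining identity $q\circ (b_1\cdots b_{n_k}) = (\bar b_1\cdots \bar b_{n_k})\circ q$ and the existence of a linear right inverse to $q$, the operator norms satisfy $\|\bar b_1\cdots \bar b_{n_k}\|\asymp \|q\circ (b_1\cdots b_{n_k})\|$ with constants depending only on $q$. Dividing by $\|b_1\cdots b_{n_k}\|$ and letting $k\to\infty$, the right-hand side tends to $\|\bar\pi_b\|>0$, whence $\frac{1}{n_k}\bigl(\log\|\bar b_1\cdots \bar b_{n_k}\|-\log\|b_1\cdots b_{n_k}\|\bigr)\to 0$. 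Passing to the limit yields $\lambda_1(V/W)=\lambda_1(V)\geq \lambda_1(W)$, as required.

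The principal technical subtlety I anticipate lies in the step identifying $\nu_b$ with $(\pi_b g)\nu$. It rests on two facts: first, the non-degeneracy of $\nu$, which both guarantees that the pushforward of $\nu$ by a possibly non-invertible endomorphism is a well-defined probability measure and validates the hypothesis of Lemma \ref{lemma.dominated1}; second, the observation that the scalar normalization $\|b_1\cdots b_{n_k}\|^{-1}$ is irrelevant to the projective action. Once this identification is achieved, the first assertion reduces to an integration argument and the second to a routine comparison of operator norms through the quotient map $q$.
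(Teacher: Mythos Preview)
Your proof is correct and essentially follows the paper's approach. For the first assertion, both arguments hinge on the identity $\nu_b=(\pi_b g)\nu$ obtained from \eqref{eq.martingale} and Lemma \ref{lemma.dominated1}, then use non-degeneracy of $\nu$; the paper proceeds directly (taking $g=\id$) by noting $\nu_b(\PP(W))=0$ a.s.\ from $\nu=\int\nu_b\,d\beta$, while you phrase it as a contradiction via integration over $A$, but this is cosmetic. For the second assertion, the paper argues contrapositively (if $\lambda_1(V/W)<\lambda_1(W)$, a block-triangular computation forces $\im(\pi_b)\subset W$, contradicting the first part), whereas you argue directly: the first assertion gives $q\circ\pi_b\neq 0$, hence $\|\bar b_1\cdots\bar b_{n_k}\|/\|b_1\cdots b_{n_k}\|$ stays bounded away from zero along the subsequence, yielding $\lambda_1(V/W)=\lambda_1(V)$. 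Both routes are equally short; yours is perhaps more transparent in that it extracts the quantitative content of the first assertion directly.
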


\begin{proof}
Since $\nu=\int{\nu_b d\beta(b)}$ and $\nu([W])=0$ (as $\nu$ is non-degenerate in $\PP(V)$), there exists $B'\subset B$ such that $\beta(B')=1$ and $\nu_b([W])=0$ for every $b\in B'$. 
By \eqref{eq.martingale}, restricting if necessary to a subset of $B'$ of $\beta$-measure $1$, we can assume $b_1\cdots b_n \nu \to \nu_b$ weakly for every $b \in B'$.  Let $b\in B'$ and $\pi_b$ a limit point of $\frac{b_1\cdots b_n}{\|b_1\cdots b_n\|}$. 
Since $\nu$ is non-degenerate in $\PP(V)$ (and $\pi_b\neq 0$), it follows from Lemma \ref{lemma.dominated1} that $b_1\cdots b_n \nu \to\pi_b \nu$. By \eqref{eq.martingale}, we deduce that $\pi_b \nu = \nu_b$. Hence the support of $\nu_b$ is included in (is actually equal to) $\im(\pi_b)$. But $\nu_b([W])=0$. Hence $\im(\pi_b)\not\subset W$. This proves the first claim. The claim about Lyapunov exponents follows from the fact that if $\lambda_1(V/W)<\lambda_1(W)$ then all limits points of $\frac{b_1\cdots b_n}{\|b_1\cdots b_n\|}$ have their image included in $W$ (a convenient way to see this is by writing a matrix representation of elements of $\Gamma_{\mu}$ in upper triangular form with the top-left block representing the action on $W$). 
\end{proof}

To state and prove the next result (Lemma \ref{lemma.nondegenerate}), we introduce some further definitions.

\begin{definition}
Let $S\subset \Endo(V)$. We denote by $C(S)$ the closed subset $\overline{\mathbb{R}_{>0}S}$ of $\Endo(V)$. 
\end{definition}

In other words, $C(S)$ is the set of all possible limits in $\Endo(V)$ of $\epsilon_n g_n$ with $\epsilon_n>0$ and $g_n\in S$. Observe also that if $\Gamma$ is a semigroup, $C(\Gamma)$ is a (closed) semigroup.
 
\begin{lemma}\label{lemma.nondegenerate}
Let $\mu$ be a probability measure on $\GL(V)$ and 
$\nu$ a non-degenerate $\mu$-stationary probability on $\PP(V)$. Then for   $\beta$-almost every $b=(b_i)_{i\in \N}\in B$, denoting $\Pi_b:=\{\pi\in \Endo(V)\setminus \{0\}: \pi \nu=\nu_b\}$, the following hold: 
\begin{enumerate}
\item 
$\Pi_b=C(\Pi_b) \setminus \{0\}$  and 
all the endomorphisms in $\Pi_b$ have the same image. \item For every $\gamma\in C(\Gamma)$ and every limit point $\pi_b$ of $\frac{b_1\cdots b_n}{\|b_1\cdots b_n\|}$  such that $\pi_b \gamma\neq 0$, we have $\pi_b\gamma\in \Pi_b$.  
\item  For every $\pi\in  \Pi_b$ and $\gamma \in C(\Gamma)$ such that $(\pi \gamma)^2\neq 0$, $\pi \gamma$ is diagonalizable over $\C$ with all of its non-zero eigenvalues having the same modulus. 
\end{enumerate}
\end{lemma}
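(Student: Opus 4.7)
The plan is to prove (1), (2), (3) in turn, using as main tools Lemma \ref{lemma.dominated1}, the martingale convergence \eqref{eq.martingale}, and the non-degeneracy identity \eqref{eq.non.deg}. For (1), if $\pi = \lim_n \epsilon_n \pi_n \in C(\Pi_b) \setminus \{0\}$ with $\pi_n \in \Pi_b$ and $\epsilon_n > 0$, then positive rescaling leaves the pushforward on $\PP(V)$ invariant, so $\epsilon_n \pi_n \nu = \pi_n \nu = \nu_b$; Lemma \ref{lemma.dominated1} then yields $\pi \nu = \nu_b$, i.e.~$\pi \in \Pi_b$. The reverse inclusion is immediate, and the equal-image statement follows from \eqref{eq.non.deg}: $\PP(\im \pi) = \langle \supp(\pi\nu)\rangle = \langle \supp \nu_b\rangle$ is the same projective subspace for every $\pi \in \Pi_b$.

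For (2), I would start from the convergence \eqref{eq.martingale}, which by Fubini produces, for $\beta$-a.e.~$b$, a set $E_b \subseteq \Gamma_\mu$ of full $\sum_i 2^{-i-1}\mu^i$-measure (hence dense in $\Gamma_\mu$) such that $b_1 \cdots b_n g \nu \to \nu_b$ for every $g \in E_b$. Fixing such a $b$ and a limit point $\pi_b = \lim_k b_1 \cdots b_{n_k}/\|b_1 \cdots b_{n_k}\|$, for $g \in E_b$ with $\pi_b g \neq 0$, Lemma \ref{lemma.dominated1} applied to $(b_1 \cdots b_{n_k}/\|b_1 \cdots b_{n_k}\|) g \to \pi_b g$ gives $\pi_b g \nu = \lim_k b_1 \cdots b_{n_k} g \nu = \nu_b$ (again by scaling-invariance of the pushforward), so $\pi_b g \in \Pi_b$. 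For a general $\gamma \in C(\Gamma_\mu)$ with $\pi_b \gamma \neq 0$, approximate $\gamma = \lim_l \epsilon_l g_l'$ with $g_l' \in E_b$ by density; for $l$ large $\pi_b g_l' \neq 0$, hence $\pi_b g_l' \in \Pi_b$, and positive scaling preserves this. Part (1) then concludes $\pi_b \gamma \in \Pi_b$.

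For (3), the strategy is to show that all iterates $f^n := (\pi\gamma)^n$ lie in $\Pi_b$, and then use the projective compactness of $\Pi_b / \R_{>0}$ (which follows from (1), since $\Pi_b$ modulo $\R_{>0}$ is a closed subset of $\PP(\Endo(V))$) together with a rank argument to deduce the spectral conclusion. Write $f^n = \pi \cdot \gamma_n$ with $\gamma_n := (\gamma\pi)^{n-1}\gamma$; provided $\pi \in C(\Gamma_\mu)$ (see the obstacle below), $\gamma_n \in C(\Gamma_\mu)$ since $C(\Gamma_\mu)$ is a closed semigroup, so (2) gives $f^n \in \Pi_b$ whenever $f^n \neq 0$. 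Applying this to $n=2$: $f^2 \in \Pi_b$, so by (1), $\im f^2 = W_b := \im \pi$, which combined with $\im f \subseteq W_b$ forces $f|_{W_b}$ to be surjective hence bijective on $W_b$. Consequently $f^n \neq 0$ for all $n \geq 1$, and $f^n \in \Pi_b$ throughout; any projective limit $\pi^*$ of $f^n/\|f^n\|$ lies in $\Pi_b$ by (1), hence has rank $\dim W_b$. If $f|_{W_b}$ had eigenvalues of distinct moduli or a nontrivial Jordan block, the normalized powers would projectively concentrate on a strict subspace of $W_b$ (the top-modulus generalized eigenspace, respectively the top generalized-eigenvector direction of the largest Jordan block), contradicting the full-rank property of $\pi^*$. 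Thus $f|_{W_b}$ is diagonalizable with all eigenvalues of the same (nonzero) modulus; since moreover $\im f \subseteq W_b$ makes the $f$-action on $V/W_b$ zero and $f|_{W_b}$ is invertible, a direct Jordan-form argument then yields diagonalizability of $f$ on all of $V$.

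The main obstacle lies in ensuring $\pi \in C(\Gamma_\mu)$ for every $\pi \in \Pi_b$, which is what makes $\gamma_n \in C(\Gamma_\mu)$ and unlocks the iteration step via (2). The natural route is to prove $\Pi_b = C(\mathcal{L}_b) \setminus \{0\}$, where $\mathcal{L}_b$ denotes the set of limit points of $b_1 \cdots b_n/\|b_1 \cdots b_n\|$ (clearly a subset of $\Pi_b \cap C(\Gamma_\mu)$). Since (1) already supplies $\Pi_b = C(\Pi_b)\setminus\{0\}$, the task reduces to showing that every $\pi \in \Pi_b$ is a positively-scaled limit of elements of $\mathcal{L}_b$, which relies on non-degeneracy of $\nu$ combined with the characterization \eqref{eq.non.deg} of $\supp \nu_b$ and the dynamical origin of $\nu_b$ via \eqref{eq.martingale}. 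Once this density statement is in place, one applies (2) entrywise along the approximating sequence and passes through the limit via (1), completing the argument.
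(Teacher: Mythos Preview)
Your arguments for (1) and (2) are correct and essentially coincide with the paper's: the paper also obtains \eqref{eq.nub.groupe} for all $g\in\Gamma_\mu$ by combining \eqref{eq.martingale} with Lemma \ref{lemma.dominated1} and a closedness/density argument, and then passes to $C(\Gamma_\mu)$ via (1).

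For (3), your overall strategy (show $(\pi\gamma)^k\in\Pi_b$ for all $k$, then compare the rank of a projective limit of the normalized powers with that of $\pi\gamma$ to force the spectral conclusion) is exactly the paper's. You have also correctly isolated the real issue: the iteration requires $\gamma(\pi\gamma)^{k-1}\in C(\Gamma_\mu)$, which in turn needs $\pi\in C(\Gamma_\mu)$, and item (2) as stated only applies when the left factor is a limit point $\pi_b$. However, your proposed resolution --- establishing $\Pi_b=C(\mathcal{L}_b)\setminus\{0\}$ --- is not correct in general. For instance, take $\mu$ with support generating $\SO(n)$ acting on $V=\R^n$: then $\nu=\nu_b$ is the round measure, $\mathcal{L}_b\subset\SO(n)$, and $C(\mathcal{L}_b)\subset\R_{\geq 0}\cdot\SO(n)$, whereas any reflection $\rho\in\mathrm{O}(n)\setminus\SO(n)$ satisfies $\rho\nu=\nu$, so $\rho\in\Pi_b\setminus C(\mathcal{L}_b)$. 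Thus $\Pi_b$ is in general strictly larger than $C(\mathcal{L}_b)\setminus\{0\}$, and there is no reason to expect $\Pi_b\subset C(\Gamma_\mu)$.

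The paper does not attempt to close this gap. Its proof of (3) is carried out only for $\pi=\pi_b$ a limit point of $b_1\cdots b_n/\|b_1\cdots b_n\|$ (so automatically $\pi_b\in C(\Gamma_\mu)$, and (2) applies directly with left factor $\pi_b$ and right factor $\gamma(\pi_b\gamma)^{k-1}\in C(\Gamma_\mu)$). This restricted version of (3) is all that is invoked later, in step \ref{roman3} of the proof of Proposition \ref{prop.ellis}; the phrasing ``for every $\pi\in\Pi_b$'' in the statement is thus a slight overreach that the proof does not, and need not, justify. So rather than trying to prove the density claim, you should simply argue (3) for limit points $\pi_b$ exactly as you outlined (your rank/Jordan argument then goes through verbatim), and note that this suffices for the sequel.
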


\begin{proof}[Proof of Lemma \ref{lemma.nondegenerate}]
\begin{enumerate}

\item The first claim follows from Lemma \ref{lemma.dominated1}. The second one follows from \eqref{eq.non.deg}: the image of any element in $\Pi_b$ is equal to the subspace generated by the support of $\nu_b$.  
\item 
First we show that for $\beta$-almost every $b\in B$, the following holds for every $g\in \Gamma_{\mu}$, 
\begin{equation}\label{eq.nub.groupe}\pi_b g \nu=\nu_b.\end{equation}
Indeed, for $\beta$-almost every $b\in B$ and for all $g\in \Gamma_{\mu}$, since $\nu$ is non-degenerate and 
$\pi_b g\neq 0$, it follows from Lemma \ref{lemma.dominated1} that 
\begin{equation}\label{eq.pib}b_1\cdots b_n g \nu\to \pi_b g \nu.
\end{equation}
Combining \eqref{eq.martingale} and \eqref{eq.pib} we get that    
for $\beta\otimes (\sum_{i=1}^{\infty}{2^{-i}\mu^i})$-almost every $(b,g)\in B\times \Gamma_{\mu}$, 
$$\pi_b g \nu=\nu_b.$$
By Fubini, there exists a $\beta$-full measure subset $B'\subseteq B$ such that for every $b\in B'$ the previous identity holds for $(\sum_{i=1}^{\infty}{2^{-i}\mu^i})$-almost every  $g\in \Gamma_{\mu}$. Since, for each $b \in B'$, the set of $g$ in $\Gamma_{\mu}$ such that the previous identity holds is closed (see Lemma \ref{lemma.dominated1}) and the support of $(\sum_{i=1}^{\infty}{2^{-i}\mu^i})$ is $\Gamma_\mu$, the equality \eqref{eq.nub.groupe} follows. Using this, we deduce from (1) that $\pi_b \gamma\in \Pi_b$ whenever $\gamma\in C(\Gamma)$ and $\pi_b \gamma\neq 0$. 
 
\item  We restrict to a  subset of $B$ of $\beta$-full measure where (2) holds.  
Let now $\gamma\in C(\Gamma)$ such that $(\pi_b \gamma)^2\neq 0$. Since $\gamma, \pi_b\in C(\Gamma)$ and $C(\Gamma)$ is a semigroup, $\gamma\pi_b\gamma$ also belongs to $C(\Gamma)$ so that, by (2), $(\pi_b \gamma)^2\in \Pi_b$. 
In particular by (1), 
 $$\im(\pi_b \gamma)=\im((\pi_b \gamma)^2).$$
This implies that $\pi_b \gamma$ is not nilpotent. 
Therefore, for every $k\in \N$, $(\pi_b \gamma)^k\neq 0$ and since  $\gamma(\pi_b \gamma)^{k-1}\in C(\Gamma)$, we deduce from (2) that $(\pi_b \gamma)^k\in \Pi_b$ for every $k\in \N$. Let $(\pi_b \gamma)^{\infty}$ be any limit point of $\frac{(\pi_b \gamma)^k}{\|(\pi_b \gamma)^k\|}$. We have $(\pi_b \gamma)^{\infty}\in C(\Pi_b) \setminus \{0\}$ so that by (1),  $(\pi_b \gamma)^{\infty}\in \Pi_b$, and 
\begin{equation}\label{eq.rk.pr.inf}
\im((\pi_b \gamma)^{\infty})=\im(\pi_b \gamma).
\end{equation}
Writing $\pi_b \gamma$ in its Jordan canonical form, we deduce from \eqref{eq.rk.pr.inf} that all the non-zero eigenvalues of $\pi_b \gamma$ have the same modulus and that their respective geometric multiplicity coincide with their algebraic one, ending the proof. 
\end{enumerate}
\end{proof}

\subsection{Recurrence properties of random walks}
In this section we record a recurrence property of random matrix products which will be of crucial use for the construction of a projection map in the next section.

\begin{lemma}(Recurrence of random walks on spaces with the same Lyapunov)\label{lemma.recurrence}
Let $\mu$ be a probability measure on a topological group $G$ and 
  $\rho: G\to \GL(V)$ and $\rho': G\to \GL(V')$ two strongly irreducible representations 
  of $G$ such that $\rho_\ast\mu$ and $\rho'_\ast \mu$ have finite first moment and same top Lyapunov exponents.  
 Then for $\beta$-almost every $b\in B$, there exists a subsequence $(n_k)_{k\in \N}$ 
such that \begin{equation}\label{eq.inf} 
\inf_{k\geq 0}\frac{\|\rho(b_1\cdots b_{n_k})\|}  {\|\rho'(b_1\cdots b_{n_k})\|}>0.\end{equation}
\end{lemma}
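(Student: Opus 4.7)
The plan is to prove the lemma via Furstenberg's formula combined with Atkinson's recurrence theorem for zero-mean cocycles over a measure-preserving skew product. Set $\lambda := \lambda_1(\rho_*\mu) = \lambda_1(\rho'_*\mu)$ and choose $\mu$-stationary probability measures $\nu$ on $\PP(V)$ and $\nu'$ on $\PP(V')$ (which exist by compactness and are non-degenerate since $\rho,\rho'$ are strongly irreducible, by the standard argument recalled in Lemma \ref{lemma.connected.proper}). I would consider the skew-product $(B\times \PP(V)\times \PP(V'),\,\beta\otimes \nu\otimes\nu',\,T)$ with $T(b,x,x')=(\theta b,\,b_1 x,\,b_1 x')$ together with the integrable cocycle
\[
\phi(b,x,x') := \log\|\rho(b_1)\hat v\| - \log\|\rho'(b_1)\hat v'\|,
\]
where $\hat v,\hat v'$ are unit representatives of $x,x'$. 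Since $\rho,\rho'$ are strongly irreducible, Furstenberg's formula gives $\int \log\|\rho(b_1)\hat v\|\, d\beta(b)\,d\eta([\hat v]) = \lambda$ for \emph{every} $\mu$-stationary $\eta$ on $\PP(V)$ (and similarly for $\rho'$); in particular, $\int\phi$ vanishes on every ergodic component of $(T,\beta\otimes\nu\otimes\nu')$. The Birkhoff sums telescope to
\[
S_n\phi(b,x,x') = \log\|\rho(\sigma_n)\hat v\|-\log\|\rho'(\sigma_n)\hat v'\|, \qquad \sigma_n = b_n\cdots b_1.
\]

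By Atkinson's recurrence theorem applied to each ergodic component, $\liminf_n |S_n\phi(b,x,x')| = 0$ for $(\beta\otimes\nu\otimes\nu')$-a.e.\ $(b,x,x')$, so there is an infinite set of integers $n$ with $|S_n\phi|\leq 1$. To upgrade this vector-norm bound to an operator-norm one, I would invoke compactness of the unit sphere in $\Endo(V)$ and $\Endo(V')$: within the Atkinson set, one extracts a sub-subsequence $(n_k)$ along which both $|S_{n_k}\phi|\leq 1$ and $\rho(\sigma_{n_k})/\|\rho(\sigma_{n_k})\| \to \pi_b$, $\rho'(\sigma_{n_k})/\|\rho'(\sigma_{n_k})\| \to \pi'_b$ with $\pi_b,\pi'_b \neq 0$. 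Non-degeneracy of $\nu,\nu'$ ensures that for $\nu\otimes\nu'$-generic $(\hat v,\hat v')$, neither vector lies in $\ker\pi_b$ or $\ker\pi'_b$ (each being a proper subspace); hence $\|\rho(\sigma_{n_k})\hat v\|\asymp \|\rho(\sigma_{n_k})\|$ and $\|\rho'(\sigma_{n_k})\hat v'\|\asymp \|\rho'(\sigma_{n_k})\|$ along $(n_k)$. Combining yields $\|\rho(\sigma_{n_k})\|/\|\rho'(\sigma_{n_k})\|$ bounded between two positive constants, establishing the conclusion for the reverse product $\sigma_n = b_n\cdots b_1$.

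Finally, to match the lemma's statement involving the forward product $b_1\cdots b_n$, I would pass to the bilateral Bernoulli shift $(G^\Z,\mu^{\otimes \Z},\theta)$, where the forward product with i.i.d.\ inputs is realized as a reverse product on the time-reversed coordinates, and repeat the argument there. The main obstacle I anticipate is the synchronisation in the second paragraph: Atkinson produces an abstract infinite ``good'' set of times depending on $(b,\hat v,\hat v')$, while the compactness-based limit-point extraction depends only on $b$; one has to verify that compactness can still operate within the good Atkinson set and that the kernels of the limit operators $\pi_b,\pi'_b$ are avoided by generic $(\hat v,\hat v')$, which is precisely where non-degeneracy of $\nu$ and $\nu'$ is crucial. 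A secondary subtlety is the transition from the reverse to the forward product, conceptually clean via the two-sided shift but requiring careful bookkeeping.
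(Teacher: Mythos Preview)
Your core strategy coincides with the paper's: build the skew-product over $B$ with fibre $\PP(V)\times\PP(V')$, show the additive cocycle $\log\|\rho(\cdot)v\|-\log\|\rho'(\cdot)v'\|$ has zero mean on every ergodic component (via the Furstenberg/Furstenberg--Kifer formula and equality of top exponents), and invoke Atkinson's recurrence. Two technical points, however, are handled differently, and on the second your argument does not close.

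\textbf{Forward versus reverse product.} Your cocycle telescopes to the reverse product $b_n\cdots b_1$, and you propose to pass to the bilateral shift to recover $b_1\cdots b_n$. The paper avoids this detour by working on the \emph{dual} spaces: since $g\mapsto \rho(g)^t$ is an anti-homomorphism, the same skew-product on $\PP(V^*)\times\PP(V'^*)$ produces Birkhoff sums that are exactly $\log\|\rho^t(b_1\cdots b_n)v\|-\log\|\rho'^t(b_1\cdots b_n)v'\|$, and $\|\rho^t(g)\|=\|\rho(g)\|$. This is cleaner and removes the need for the two-sided extension.

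\textbf{From vector norms to operator norms.} This is where your proposal has a genuine gap---precisely the ``main obstacle'' you flag. You want to extract, \emph{inside the Atkinson good set} $A(b,x,x')$, a subsequence along which the normalized products converge to $\pi_b,\pi'_b$, and then argue that generic $(x,x')$ avoid $\ker\pi_b\times\ker\pi'_b$. But $A$, hence the subsequence, hence $\pi_b,\pi'_b$, all depend on $(x,x')$; you cannot invoke non-degeneracy of $\nu\otimes\nu'$ to avoid a set that is itself a function of the point you are trying to choose generically. The paper resolves this in two moves: first, after Atkinson, it uses Fubini to fix a \emph{single} pair $([v],[v'])$ for which the Atkinson bound holds for $\beta$-a.e.\ $b$; second, for that fixed $v$ (and $v'$), it quotes \cite[III, Proposition 3.2(c)]{bougerol.lacroix}: under strong irreducibility, for any nonzero $v$ one has almost surely
\[
\inf_{n\in\N}\frac{\|\rho^t(b_1\cdots b_n)v\|}{\|\rho^t(b_1\cdots b_n)\|}>0,
\]
a bound uniform over \emph{all} $n$, not just a subsequence. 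Combining this with the Atkinson subsequence immediately yields the operator-norm ratio bound, with no circularity. Your limit-point argument is essentially an attempt to reprove this Bougerol--Lacroix input on the fly; as written it does not succeed, and once you quote that result your compactness/kernel discussion becomes unnecessary.
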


\begin{proof}
For every $g\in G$ we denote by $\rho^t(g)\in \Endo(V^*)$ the transpose linear map on the dual space $V^*$ i.e. $\rho^t(g) f(v)=f(\rho(g)v)$. Similarly one defines  $\rho'^t(g)\in \Endo(V'^*)$. 
We equip $V^*$ and $V'^*$ with the dual norms so that $\|\rho(g)\|=\|\rho^t(g)\|$ and $\|\rho'(g)\|=\|\rho'^t(g)\|$ for every $g\in G$. 
Let $H_{\mu}$ be the subgroup of $\GL(V^*)\times \GL(V'^*)$ image of $\zeta: g\mapsto (\rho^t(g), \rho'^t(g))$. The group $H_{\mu}$ acts on $X:=\PP(V^*)\times \PP(V'^*)$.  
By compactness of $X$, we can find a   $\zeta_\ast \mu$-stationary ergodic probability measure $\nu$  on $X$.
Consider now  the dynamical system $(Y,\hat{T}, \eta) $ where  $Y:=B\times X$, $\hat{T}(b, x):=(T b,\zeta(b_1) x), \eta=\beta \otimes \nu$ and $T: B \to B$ is the shift map.  The measure $\eta$ is $\hat{T}$-ergodic \cite[Proposition 2.14]{BQ.book}. 
For every $g\in G$ and $x=([v],[v'])\in X$ we let $\sigma(g,([v],[v'])):=\log \frac{\| \rho^t(g)   v\| \|v'\|}{\|v\| \|\rho'^t(g)   v'\|}$ and  for every $b=(b_i)_{i\in \N}\in B$, 
$f(b,x):=\sigma(b_1,x)$. Denoting $q$ and $q'$ the projections from $X$ to $\PP(V^*)$ and $\PP(V'^*)$ respectively, we have 
$$\int_{X} { f d\eta}=\int_{G\times \PP(V^*)}{\log \frac{\|\rho^t(g) v\|}{\|v\|} d\mu(g) d(q_\ast \nu)([v]) - \int_{G \times \PP(V'^*)} {\log \frac{\|\rho^t(g) v\|}{\|v\|} d\mu(g) d(q'_\ast \nu)([v])}}.$$
The projection maps $q: X\to \PP(V^*)$ and $q': X\to \PP(V'^*)$ being $H_{\mu}$-equivariant, the measures $q_\ast \eta$ and $q'_\ast \eta$ are respectively $\rho^t_\ast \mu$ and $\rho'^t_\ast \mu$-stationary probability measures on $\PP(V)$ and $\PP(V^*)$. Since $H_{\mu}$ acts irreducibly on $V^*$ and $V'^*$ (because $\rho$ and $\rho'$ are irreducible representations), it follows from \cite{furstenberg.kifer} that 
all stationary probability measures in $\PP(V^*)$ (resp. $\PP(V'^*)$) have the same cocycle average. Moreover, for iid random matrix products in $\GL_d(\C)$ the Lyapunov exponents of a probability measure $\mu$ are the same as the Lyapunov exponents of its pushforward by the transpose map (see \cite{key.reversible} for a more general statement). Therefore, since by assumption the Lyapunov exponents of $V$ and $V'$ are the same, it follows that $\lambda_1(\rho^t_\ast \mu)=\lambda_1(\rho'^t_\ast \mu)$. Hence 
$$\int{f d\eta}=0.$$
By Atkinson's result \cite{atkinson} we deduce that for $\eta$-almost every $(b,x)$, 
there exists an increasing subsequence $(n_k)_{k\in \N}$ such that for every $k \in \N$ \begin{equation}\label{equation.minore}\sum_{i=1}^{n_k}{f(\hat{T}^i(b,x))}\geq -1.\end{equation}
By Fubini, we can then find $x=([v],[v'])$ such that  \eqref{equation.minore} holds for   $\beta$-almost every $b\in B$ (with $n_k$ depending on $b$). 
But $\sigma$ is a cocycle for the right action (i.e.~ $\sigma(gh,x)=\sigma(h, \zeta(g) x)+\sigma(g,x)$). Hence for every $n \in \N$ 
$$\sum_{i=1}^n {f(\hat{T}^i(b,x))}= \log \frac{\|\rho^t(b_1\cdots b_n) v\| \|v'\|}  {\|\rho'^t(b_1\cdots b_n)v'\| \|v\|}.$$ We deduce that there exists $B_1\subset B$ such that $\beta(B_1)=1$ and for every $b\in B_1$, 
 there exists a subsequence $(n_k)_{k\in \N}$ such that 

\begin{equation}\label{eq.inf1} \inf_{k\geq 0}\frac{\|\rho^t(b_1\cdots b_{n_k}) v\|  }  {\|\rho'^t(b_1\cdots b_{n_k})v'\|  }>0.\end{equation}
On the other hand,  the representations $\rho$ and $\rho'$ are strongly irreducible, so are their transpose maps. Hence 
  by \cite[III, Proposition 3.2 (c)]{bougerol.picard},  there exists $B_2\subset B$ such that $\beta(B_2)=1$ and for every $b\in B_2$, 

\begin{equation}\label{eq.inf2}\inf_{n\in \N}\frac{\|\rho^t(b_1\cdots b_{n }) v\|}{\|\rho^t(b_1\cdots b_n)\|} > 0 \qquad \text{and} \qquad \inf_{n\in \N}\frac{\|\rho'^t(b_1\cdots b_{n }) v\|}{\|\rho'^t(b_1\cdots b_n)\|}>0 .\end{equation}
Combining \eqref{eq.inf1} and \eqref{eq.inf2}, we deduce that for every $b\in B_1\cap B_2$, there exists a subsequence $(n_k)_{k\in \N}$ such that \eqref{eq.inf} holds with $\rho$ and $\rho'$ replaced by $\rho^t$ and $\rho'^t$ respectively. Hence \eqref{eq.inf} holds too.  
\end{proof}

\subsection{Consequence on the semigroup generated by the support of $\mu$}
The goal of this part is to prove a key technical ingredient (Proposition \ref{prop.ellis}) for the proof of Theorem \ref{thm.key}. 
We introduce the following notation. 
To any pair $(S, \pi)$ where $S$ is a subset of  $\Endo(V)$ and $\pi$ an endomorphism of $V$, we  associate the subset $$\pi S \pi=\{\pi \gamma \pi | \gamma\in S\}$$ of $\Endo(V)$ whose restriction to $\im(\pi)$ yields  a subset of $\Endo(\im(\pi))$. We  denote by $\langle \pi S \pi \rangle$ the semigroup of $\GL(V)$ generated by $\pi S \pi$. 

The key technical result for the proof of Theorem \ref{thm.key} is the following. 
\begin{proposition}\label{prop.ellis}
Let $\mu$ be a  probability measure on $\GL(V)$ with finite first moment and $\nu$ a non-degenerate $\mu$-stationary probability on $\PP(V)$.  Let $W_0=\{0\}\subset W_1\subset  \cdots \subset W_{r-1}\subset W_r=V$ be a  Jordan–H\"{o}lder decomposition of $V$  as $\Gamma_\mu$-module. For every $i \in \{1,\cdots, r\}$ such that $\lambda_1(W_i/W_{i-1})=\lambda_1(\mu)$, 
there exists $\pi \in C(\Gamma_\mu)$ such that $\pi^2=\pi$,  $\pi(W_i)\not\subset W_{i-1}$, $\im(\pi)\not\subset W_{r-1}$ and  there exists a non-empty Zariski-open subset $O$   of $H_\mu$ such that $\langle \pi(\Gamma_\mu \cap O)\pi\rangle_{|_{\im(\pi)}}$ is a subsemigroup of $\GL(\im(\pi))$ whose projection to $\PGL(\im(\pi))$ has compact closure. 
\end{proposition}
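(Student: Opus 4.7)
The proof proceeds in three stages: construct a limit map $\pi_b$ with the required non-inclusion properties, upgrade it to an idempotent $\pi$, and then deduce relative compactness of the restricted semigroup from invariance of the non-degenerate measure $\nu_b$.

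First, for $\beta$-almost every $b \in B$, I extract by compactness in the unit sphere of $\Endo(V)$ a subsequence along which $a_{n_k}/\|a_{n_k}\|$ converges to some $\pi_b \in C(\Gamma_\mu)\setminus\{0\}$, where $a_n := b_1\cdots b_n$. Lemma \ref{lemma.Vr} applied to the $\Gamma_\mu$-invariant subspace $W_{r-1}$ gives $\im(\pi_b) \not\subset W_{r-1}$. To additionally secure $\pi_b(W_i) \not\subset W_{i-1}$, I apply Lemma \ref{lemma.recurrence} to the two strongly irreducible $H_\mu$-representations on the Jordan--H\"older factors $W_r/W_{r-1}$ and $W_i/W_{i-1}$, which share top Lyapunov exponent $\lambda_1(\mu)$ by the critical hypothesis. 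Refining the subsequence along the resulting recurrent times, together with a comparison of norms across the filtration, ensures that the endomorphism of $W_i/W_{i-1}$ induced by $\pi_b$ is non-zero, giving $\pi_b(W_i) \not\subset W_{i-1}$.

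Next, I turn $\pi_b$ into an idempotent. Choose a Zariski-generic $\gamma \in \Gamma_\mu$ for which $(\pi_b\gamma)^2 \neq 0$ (such $\gamma$ form a non-empty Zariski-open subset). Lemma \ref{lemma.nondegenerate}(3) then says $\pi_b\gamma$ is diagonalizable over $\C$ with all non-zero eigenvalues sharing a common modulus $\lambda>0$. The normalized powers $(\pi_b\gamma/\lambda)^n$ have eigenvalues on the unit circle, so by a pigeonhole argument some subsequence converges in $\Endo(V)$ to an operator $\pi$ with spectrum contained in $\{0,1\}$; thus $\pi^2 = \pi$, and $\pi \in C(\Gamma_\mu)$ since $C(\Gamma_\mu)$ is a closed semigroup stable under normalization. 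By Lemma \ref{lemma.nondegenerate}(1), $\im(\pi) = \im(\pi_b\gamma) = \im(\pi_b) \not\subset W_{r-1}$. Choosing $\gamma$ sufficiently generically moreover places $\ker(\pi) = \ker(\pi_b\gamma)$ in general position relative to $W_{i-1}$, preserving $\pi(W_i) \not\subset W_{i-1}$.

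Finally, set $O := \{g \in H_\mu : \pi g \pi \neq 0\}$, a non-empty Zariski-open subset of $H_\mu$ (it contains the identity since $\pi \cdot \id \cdot \pi = \pi \neq 0$). Writing $\pi = \pi_b \gamma'$ with $\gamma' \in C(\Gamma_\mu)$, for any $g \in \Gamma_\mu \cap O$ we have $\pi g \pi = \pi_b \cdot (\gamma' g \pi) \in \pi_b \cdot C(\Gamma_\mu)$, so Lemma \ref{lemma.nondegenerate}(2) yields $(\pi g \pi)\nu = \nu_b$. Using $\pi|_{\im(\pi)} = \id$ and $\supp \nu_b \subset \PP(\im(\pi))$ we have $\pi \nu_b = \nu_b$; combined with $\pi^2 = \pi$ this gives $(\pi g \pi)\nu_b = (\pi g \pi)\nu = \nu_b$. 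The same computation propagates to products (using $P \pi = P$ for any $P = \pi g_1 \pi \cdots \pi g_k \pi$), so the semigroup $\mathcal{S} := \langle \pi(\Gamma_\mu \cap O)\pi\rangle|_{\im(\pi)}$ preserves $\nu_b$ on $\PP(\im(\pi)) = \PP(V_{\nu_b})$, where $\nu_b$ is non-degenerate by the very definition of $V_{\nu_b}$. A classical Furstenberg-type argument now concludes: given $s_n \in \mathcal{S}$ with unbounded projection to $\PGL(\im(\pi))$, a subsequence $s_{n_k}/\|s_{n_k}\|$ converges to some $s_\infty \in \Endo(\im(\pi))\setminus\{0\}$ with non-trivial kernel, non-degeneracy of $\nu_b$ makes $s_\infty \nu_b$ well-defined, and the invariance $s_\infty\nu_b = \nu_b$ forces $\nu_b$ to concentrate on the proper projective subspace $\PP(\im s_\infty)$, a contradiction. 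The same argument shows each element of $\mathcal{S}$ is invertible on $\im(\pi)$, so $\mathcal{S} \subset \GL(\im(\pi))$ with relatively compact image in $\PGL(\im(\pi))$.

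The hardest point is the transfer of the non-inclusion $\pi_b(W_i) \not\subset W_{i-1}$ to the idempotent $\pi$, as the kernel of $\pi$ depends intricately on the auxiliary element $\gamma$; the needed general-position property must be secured within $\Gamma_\mu$ (via its Zariski-density in $H_\mu$) in a way compatible with the recurrence step producing $\pi_b$.
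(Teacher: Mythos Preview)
Your strategy matches the paper's: construct $\pi_b$ via the recurrence Lemma~\ref{lemma.recurrence}, upgrade to an idempotent $\pi$ via normalized powers of $\pi_b\gamma$, then invoke Furstenberg's criterion for the stabilizer of the non-degenerate measure $\nu_b$. The genuine gap is precisely where you flag it: the transfer of $\pi_b(W_i)\not\subset W_{i-1}$ to $\pi(W_i)\not\subset W_{i-1}$. Your proposed ``general position of $\ker\pi$'' is not made precise, and it is not clear that the relevant condition on $\gamma$ is Zariski-open (the dependence of $\ker\pi$ on $\gamma$ passes through a limiting procedure).

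In fact no extra genericity on $\gamma$ is needed; the transfer is automatic. One clean argument: since $\pi,\pi_b\in\Pi_b$, Lemma~\ref{lemma.nondegenerate}(1) gives $\im(\pi)=\im(\pi_b)$; as $\pi$ is idempotent and preserves $W_i$, every $v\in\im(\pi)\cap W_i$ satisfies $v=\pi(v)\in\pi(W_i)$, so $\pi(W_i)\supset\im(\pi_b)\cap W_i\supset\pi_b(W_i)$ (the last inclusion because $\pi_b\in C(\Gamma_\mu)$ preserves $W_i$), and the non-inclusion follows. The paper argues a variant: diagonalizability of $\pi_b\gamma$ over $\C$ passes to each quotient $W_k/W_{k-1}$ (minimal polynomials divide), so $\rho_i(\pi_b\gamma)=\rho_i(\pi_b)\rho_i(\gamma)$ is non-zero diagonalizable with eigenvalues of modulus $r$, whence $\rho_i(\pi)=\lim r^{-m_k}\rho_i(\pi_b\gamma)^{m_k}\neq 0$. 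Two smaller points also deserve attention: non-emptiness of $\{\gamma:(\pi_b\gamma)^2\neq 0\}$ is not obvious (with $\gamma=\id$ one could have $\pi_b^2=0$) and is where irreducibility of $V/W_{r-1}$ enters; and your factorization $\pi=\pi_b\gamma'$ with $\gamma'\in C(\Gamma_\mu)$ needs justification --- the paper instead shows directly that $\pi\gamma\in\Pi_b$ whenever $\gamma\in C(\Gamma_\mu)$ and $\pi\gamma\neq 0$, by writing $(\pi_b g)^{m_k}\gamma=\pi_b\cdot[(g\pi_b)^{m_k-1}g\gamma]\in\Pi_b$ and passing to the limit.
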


The endomorphism $\pi$ will be constructed using random walks.
  
\begin{proof}
The semigroup $C(\Gamma_\mu)>\Gamma_\mu$ stabilizes the Jordan–H\"{o}lder decomposition of $V$. Hence it acts naturally on each quotient $W_k/W_{k-1}$ inducing a homomorphism $\rho_k: C(\Gamma_\mu) \to \Endo(W_k/W_{k-1})$.  
Let $i\in \{1,\cdots, r\}$ such that $\lambda_1(W_i/W_{i-1})=\lambda_1(V)$. By Lemma \ref{lemma.Vr}, we have $\lambda_1(V/W_{r-1})=\lambda_1(V)=\lambda_1(W_i/W_{i-1})$. We can then find a $\beta$-generic $b\in B$ satisfying simultaneously all the conclusions of Lemmas \ref{lemma.Vr}, \ref{lemma.nondegenerate}  and \ref{lemma.recurrence} applied to the representations of $H_{\mu}$  on $W_i/W_{i-1}$ and $V/W_r$ (these representations are strongly irreducible because they are irreducible and $H_\mu$ is Zariski-connected).  
\begin{enumerate}[label=(\roman*)]
\item \label{roman1}
By Lemma \ref{lemma.recurrence} there exists a subsequence $(n_k)_{k\in \N}$ such that $\inf_{k\in \N}{\frac{\|\rho_i(b_1\cdots b_{n_k})\|}{\|\rho_r(b_1\cdots b_{n_k})\|}}>0$. Passing to a further subsequence we can assume without loss of generality that $\frac{b_1\cdots b_{n_k}}{\|b_1\cdots b_{n_k}\|}$ converges to an endomorphism $\pi_b$ of $V$ as $k \to \infty$. Clearly, $\pi_b \in C(\Gamma_\mu)$ and by Lemma \ref{lemma.Vr}, $\rho_r(\pi_b)\neq 0$. Hence 
$\inf_{k\in \N}{\frac{\|\rho_r(b_1\cdots b_{n_k})\|}{\|b_1\cdots b_{n_k}\|}}>0$ and consequently, $\inf_{k\in \N}{\frac{\|\rho_i(b_1\cdots b_{n_k})\|}{\|b_1\cdots b_{n_k}\|}}>0$. This implies that $\rho_i(\pi_b)\neq 0$. 
\item \label{roman2} 
For $k\in \{i,r\}$, it follows from \ref{roman1} and the irreducibility of the action of $\Gamma_\mu$ on $W_k/W_{k-1}$ that the set $S_k:=\{g\in H_\mu : g \im(\rho_k(\pi_b)))\subset \ker(\rho_k(\pi_b))\}$
is a proper Zariski-closed subvariety of $H_\mu$. 
By Zariski-connectedness of the group $H_\mu$ (or equivalently its topological irreducibility), $S_i\cup S_r$ is a proper closed subvariety of $H_\mu$. Since $\Gamma_\mu$ is Zariski-dense in $H_\mu$, we deduce the existence of an element $g\in \Gamma_\mu$ satisfying $\rho_k((\pi_b g)^2)\neq 0$ for $k\in \{i,r\}$. In particular, we have $(\pi_b g)^2\neq 0$.



\item \label{roman3} In view of \ref{roman2}, we are in a position to apply Lemma \ref{lemma.nondegenerate} to $\pi_b g$, and (using its notation) deduce that $\pi_b g\in \Pi_b$ and that $\pi_b g$ is a non-zero endomorphism of $V$ diagonalizable over $\C$ with all of its non-zero eigenvalues having the same modulus, say $r>0$. Since $C(\Gamma_\mu)$ is a semigroup, $\pi_b g\in C(\Gamma_\mu)$. Hence, $\pi_bg$ preserves each $W_k$ and the endomorphism that it induces on each quotient $W_k/W_{k-1}$ has either the same property (all non-zero eigenvalues have modulus equal to $r$) or is equal to zero. In particular, for every $n \geq 0$, 
$(\pi_bg)^n \neq 0$ and by Lemma \ref{lemma.nondegenerate} (2), $(\pi_bg)^n \in \Pi_b$. 

\item \label{roman4} Let $r e^{i\theta_1}, \cdots r e^{i\theta_s}$  be the  non-zero eigenvalues of $\pi_b g$. 
We can find a sequence $(m_k)$ such that $\theta_j^{m_k}\underset{k\to +\infty}{\longrightarrow}0 \; (\text{mod}\; 2\pi)$ for every $j\in \{1,\cdots, s\}$. 
Hence, we can find a limit point $\pi_b^{\infty}$ of $r^{-n}(\pi_b g)^n$ which is a projection endomorphism verifying for every $k\in \{1,\cdots, r\}$, $\rho_k(\pi_b^\infty)\neq 0$
if and only if  $\rho_k(\pi_b g)\neq 0$. Since, by \ref{roman2}, the latter condition is verified for $k\in \{i,r\}$, this gives 
$\pi_b^{\infty}(W_k)\not\subset W_{k-1}$.
\item \label{roman5}
From now on, we set $\pi:=\pi_b^{\infty}$. 
Observe that, since by Lemma \ref{lemma.nondegenerate} (1) $\Pi_b$ is closed in $\Endo(V) \setminus \{0\}$ and $\pi$ is a non-zero limit point of $r^{-n}(\pi_b g)^n$'s which are all in $\Pi_b$ by \ref{roman3}, it follows that $\pi\in \Pi_b\setminus \{0\}$. Moreover, $\pi\in C(\Gamma_{\mu})$. 

\item \label{roman6} Let $O$ be the set of elements $h\in H_\mu$ such that $\pi h \pi\neq 0$. It is a  Zariski-open subset of $H_\mu$ and it is non-empty as it  contains the identity element (as $\pi$ is a non-zero projection by \ref{roman4}).

\item \label{roman7} We now claim that $\pi$ plays the same role as $\pi_b$   in Lemma \ref{lemma.nondegenerate} (2), in other words, for any $\gamma\in C(\Gamma_\mu)$ such that $\pi \gamma \neq 0$, we have $\pi\gamma\in \Pi_b$. Indeed, let $\gamma\in C(\Gamma_\mu)$ be any endomorphism such that $\pi \gamma\neq 0$. We have $\pi=\lim_{k\to \infty}{r^{-m_k}(\pi_b g)^{m_k}}$ for some sequence $m_k$ and hence for sufficiently large $k$, $(\pi_b g)^{m_k}\gamma \neq 0$ so that by Lemma \ref{lemma.nondegenerate} (2) $r^{-m_k}(\pi_b g)^{m_k} \gamma=r^{-m_k}\pi_b (g\pi_b)^{m_k-1} g \gamma\in \Pi_b$. Hence $\pi \gamma\in C(\Pi_b)\setminus \{0\}$. By (1) of the same lemma, $\pi\gamma\in \Pi_b$ as claimed. 

\item \label{roman8}  Let $\gamma\in O \cap \Gamma_\mu$. Since $C(\Gamma_\mu)$ is a semigroup, and by \ref{roman5} $\pi \in C(\Gamma_\mu)$, we have $\gamma \pi\in C(\Gamma_\mu)$. It follows from \ref{roman7} and the definition of $O$ that $\pi \gamma \pi\in \Pi_b\setminus \{0\}$. By Lemma \ref{lemma.nondegenerate} (1)  $\im(\pi\gamma \pi)=\im(\pi)$.  But since $\pi$ is a projection,  $\im((\pi \gamma \pi)_{|{\im(\pi)}})=\im(\pi \gamma \pi)$.  Hence the restriction of $\pi\gamma \pi$ to $\im(\pi)$ is an invertible endomorphism of $\im(\pi)$.
In other words $\langle \pi (\Gamma_\mu \cap O)\pi \rangle_{|_{\im(\pi)}}\subset \GL(\im(\pi))$.
\item \label{roman9} We claim that $\nu_b$ is invariant under the action of $\langle \pi (\Gamma_\mu \cap O) \pi \rangle$. 
Indeed, it follows from \ref{roman8} that $0\not\in \langle \pi  (\Gamma_\mu \cap O) \pi \rangle$ and that   $\langle \pi (\Gamma_\mu \cap O) \pi \rangle \subset C(\Gamma_\mu)$. Since each element of $\langle \pi (\Gamma_\mu\cap O) \pi\rangle$  is invariant by left multiplication by $\pi$,   \ref{roman7} yields that   $$\langle \pi (\Gamma_\mu \cap O) \pi \rangle\subset \Pi_b\setminus \{0\}.$$
Let $\gamma\in \langle \pi (\Gamma_\mu \cap O) \pi \rangle  $. We have $\gamma \nu=\nu_b$. 
But by \ref{roman5} $\pi \nu=\nu_b$. Since $\pi$ is a projection and $\gamma$ is also invariant by right multiplication by $\pi$, we deduce that $\gamma\nu_b=\nu_b$.

\item Now we conclude the proof. It only remains to show that the closure of $\langle \pi \Gamma_\mu \pi\rangle_{|_{\im(\pi)}}$  in $\textrm{PGL}(\im(\pi))$ is a compact group (indeed the projection map $\pi$ was constructed in \ref{roman4} and it satisfies the claims made about its image in the statement, and the Zariski open subset $O$ of $H_{\mu}$ was constructed in \ref{roman6}). To see the compactness, first observe that  $\nu_b$ is a non-degenerate probability measure on $\PP(\im(\pi))$, as it follows from the fact that $\pi \in \Pi_b\setminus \{0\}$ and the non-degeneracy of $\nu$ (see \eqref{eq.non.deg}). 
By a classical result of Furstenberg \cite{furstenberg.noncommuting} (see also \cite[Corollary 3.2.2]{zimmer.book}), the stabilizer in $\PGL(E)$ (for any vector space $E$) of any non-degenerate probability measure on $\PP(E)$ is a compact group. Hence $\textrm{Stab}(\nu_b)$ is a compact group. It follows from \ref{roman8} and \ref{roman9},   that  $\langle \pi (\Gamma_\mu\cap O) \pi\rangle_{|_{\im(\pi)}}$ is a semigroup included in a compact group. Its closure is then a compact group and the proof is complete.

 \end{enumerate}

\end{proof}
\subsection{Proof of Theorem \ref{thm.key}}\label{subsec.thm.key}

The proof is in two steps.\\

(i) We first show the following particular case: every proper irreducible subspace admits a non-zero invariant subspace that is in direct sum. To prove this,
let $\nu$ be a non-degenerate $\mu$-stationary probability measure on $\PP(V)$ and $W$ a proper $\Gamma_{\mu}$-irreducible invariant subspace of $V$ with top Lyapunov exponent, as in the statement. 
We can find a Jordan–H\"{o}lder decomposition $\{0\}=W_0\subset W_1\subset \cdots \subset W_r=V$ of $V$ such that $W_1=W$ (necessarily $r\neq 1$ as $W$ is proper). 
Proposition \ref{prop.ellis} applies with $i=1$, and it yields a projection $\pi \in C(\Gamma_\mu)$ such that $\pi(W)\neq \{0\}$ and  $\im(\pi)\not\subset W$,   and   a Zariski-open subset $O$ of $H_\mu$ such that $\overline{\pi(O\cap \Gamma_\mu) \pi}$ is a compact subgroup $H$ of $\GL(\im(\pi))$. 
In particular, $\im(\pi)$ is $H$-completely reducible. But  $\pi(W)=W\cap \im(\pi)$ is $H$-invariant (because, being in $C(\Gamma_\mu)$, $\pi$ stabilises $W$). Hence there exists a direct $H$-invariant complement $W'$ of $\pi(W)$  in $\im(\pi)$. The $H$-invariance of $W'$ is equivalent to saying that 
 
\begin{equation}\label{eq.petits.zeros}
\forall \gamma\in O\cap \Gamma_\mu, \gamma W'\subset \pi^{-1}(W'). \end{equation}

Since $H_\mu$ is Zariski-connected, $\Gamma_\mu\cap O$ is Zariski-dense in $H_\mu$ and since the set of elements $\gamma\in H_\mu$ verifying $\gamma W'\subset \pi^{-1}(W')$
is Zariski-closed, it follows from \eqref{eq.petits.zeros} that 
\begin{equation}\label{eq.grands.zeros}
\forall g\in H_\mu, \quad  g W'\subset  \pi^{-1}(W'). \end{equation}
Consider now the $H_\mu$-invariant subspace $E:=\textrm{Span}(\{g v :  g\in H_\mu, v\in W'\})$. By \eqref{eq.grands.zeros}, we have $W\not\subset E$. 
Since $W$ is irreducible, we deduce that $W\cap E=\{0\}$. 
Finally $E\neq 0$ as otherwise $W'=\{0\}$, contradicting    $\im(\pi)\neq W$.  This  finishes
 the claim at the beginning of the proof.\\ 

(ii)
Now we prove the theorem in full generality. We proceed by induction on $\textrm{dim}(V)$. The result trivially holds when $\textrm{dim}(V)=1$. Let $V,\mu, \nu$, and $W$ be as in Theorem \ref{thm.key}. By step (i), there exists a non-zero $H_\mu$-invariant subspace $E$ of $V$  which is in direct sum with $W$.  Let $V':=V/E$.  Since $\nu([E])=0$ (as $\nu$ is non-degenerate and $E$ is proper), $\nu$ descends to a $\mu$-stationary probability measure $\overline{\nu}$ on $\PP(V/E)$ which is non-degenerate.  Let $q: V\to V/E$ denote the projection map.  The subspace $q(W)$ of $V/E$ is a $H_\mu$-invariant subspace of $V/E$ which is $H_\mu$-equivariantly isomorphic to $W$ (as $W\cap E=\{0\}$). Thus $q(W)$ is irreducible and $\lambda_1(q(W))=\lambda_1(\mu)=\lambda_1(V/E)$. The induction hypothesis applied with the ambient vector space $V/E$ (satisfying $\dim(V/E)<\dim(V)$), invariant subspace $q(W)$, non-degenerate stationary measure $\overline{\nu}$ and random walk measure $\mu$ yields a $G_{\mu}$-invariant  complement $\tilde{F}$ of $q(W)$ in $V/E$. 
Let $F:=q^{-1}(\tilde{F})$. This is a $G_{\mu}$-invariant subspace of $V$. Clearly $V=W+F$ and, since $W\cap E=\{0\}$, we have $W\cap F=\{0\}$, concluding the proof. \qed


\section{Proof of the main statements}\label{sec.last}

\subsection{Proof of Theorem \ref{thm.main.Lmu=0}}
By \S \ref{section.preliminary} it is enough to prove Theorem \ref{thm.inter}. 
Let $\mu$ be a probability measure on $\GL(V)$ such that $F_2(V)=\{0\}$ and assume that $\Gamma_{\mu}$ is Zariski-connected. Let $\nu$ be a non-degenerate $\mu$-stationary ergodic probability measure on $\PP(V)$. Consider an   irreducible $\Gamma_\mu$-invariant subspace $W$ of $V$. By the assumption $F_2(V)=\{0\}$, we have $\lambda_1(W)=\lambda_1(V)$ and hence Theorem \ref{thm.key} provides an invariant complement $W'$ of $W$ in $V$. We have therefore shown that any irreducible subspace has an invariant complement and this implies complete reducibility. \qed 

\begin{remark}[$V_\nu$ is a sum of same highest weight representations]\label{rk.same.weight}
Since by Corollary \ref{corol.critical.homogene} the support of $\nu$ lives in a compact $H_\mu$-orbit, every irreducible $H_\mu$-subrepresentation of $V_\nu$ has the same highest weight.

 \end{remark}

\subsection{Proof of Theorem \ref{thm.with.W}}
As mentioned in the introduction, Theorem \ref{thm.with.W} is a slightly stronger version of Theorem \ref{thm.main.Lmu=0} (which additionally combines results from our previous work \cite{aoun.sert.stationary1}).

To prove it, we will begin by treating the particular case that does not involve noise from several  Furstenberg--Kifer--Hennion exponents (FKH exponents for short) of $W$. 
Let us make these terms precise. For a  probability measure $\mu$ on $\GL(V)$  with finite first moment and a probability measure $\nu$ on a $\PP(V)$ the cocycle average of $\nu$ is the  scalar $\alpha(\nu):=\iint{\log \frac{\|g v\|}{\|v\|} d\mu(g) d\nu(\R v)}$. 
A result of Furstenberg--Kifer \cite{furstenberg.kifer} says that, when $\nu$ runs over all $\mu$-stationary ergodic probability measures on $\PP(V)$, $\alpha(\nu)$ can take finitely many values $\lambda_1(V)=\beta_1>\beta_2>\cdots>\beta_k=\beta_{\min}(V)$ and these all are top Lyapunov exponents of the FKH spaces $F_i<V$ appearing in Theorem \ref{thm.main} (and top Lyapunov exponents of the space $V_\nu$ generated by the support of $\nu$ which is then included in the corresponding $F_i$). These exponents $\beta_i$'s are called FKH exponents\footnote{Sometimes also called deterministic exponents.}. Saying that $F_2(V)=\{0\}$ (i.e.~$V$ is critical or equivalently $\beta_{\min}(V)=\lambda_1(V)$) is equivalent to saying that there is only one cocycle average. 

The particular case of Theorem \ref{thm.with.W} that we will start with (Theorem \ref{thm.pure.expansion} below), corresponds to $\lambda_1(V_{\overline{\nu}})=\beta_{\min}(W)$. This is the case for instance when $\Gamma_\mu$ acts irreducibly on both of $W$ and $V/W$ in Theorem \ref{thm.with.W}.

\begin{theorem}[Purely critical case]\label{thm.pure.expansion}
Let $\mu$ be a probability measure on $\GL(V)$ with  finite first moment and $W$ a $\Gamma_\mu$-invariant subspace. Let $\overline{\nu}$ be a $\mu$-stationary and ergodic probability measure on $\PP(V/W)$ such that 
\begin{equation}\label{eq.antidom.sec}
\alpha(\overline{\nu}) \leq \beta_{\min}(W).   
\end{equation}
Then, the following are equivalent: 
\begin{itemize}
    \item[(i)] There exists a $\mu$-stationary lift $\nu$ of $\overline{\nu}$ on $\PP(V)\setminus \PP(W)$. 
\item[(ii)] There exists a $\Gamma_{\mu}$-invariant subspace $W'$ of $V$ in direct sum with $W$ such that $\PP(W'/W)$ is the projective subspace generated by $\overline{\nu}$.
\end{itemize}
\end{theorem}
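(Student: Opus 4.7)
\textbf{Proof plan for Theorem \ref{thm.pure.expansion}.} The direction (ii)$\Rightarrow$(i) is direct: given $W'$ as in (ii), the restriction of $q:V\to V/W$ to $W'$ is a $\Gamma_\mu$-equivariant linear isomorphism $W'\to V_{\overline\nu}$ (because $W'\cap W=\{0\}$ and $q(W')=V_{\overline\nu}$), so pushing $\overline\nu$ forward through its inverse produces a $\mu$-stationary probability measure $\nu$ on $\PP(W')\subseteq \PP(V)\setminus\PP(W)$ that projects to $\overline\nu$. For (i)$\Rightarrow$(ii), let $\nu$ be a $\mu$-stationary lift of $\overline\nu$; passing to an ergodic component that still projects to $\overline\nu$ (cf.\ Lemma \ref{lemma.finite}) and invoking the reductions of Section \ref{section.preliminary}, we may assume $\Gamma_\mu$ is Zariski-connected and, by Lemma \ref{lemma.connected.proper}, $\nu$ is non-degenerate in $\PP(V_\nu)$. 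Set $W_0:=V_\nu\cap W$. It will suffice to construct a $\Gamma_\mu$-invariant complement $W'$ of $W_0$ in $V_\nu$, for then $W'\cap W\subseteq W'\cap W_0=\{0\}$ and $q(W')=q(V_\nu)=V_{\overline\nu}$, which gives (ii).

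Consider first the strict inequality $\alpha(\overline\nu)<\beta_{\min}(W)$. I claim $W_0=\{0\}$, so that $W':=V_\nu$ works. Otherwise, Lemma \ref{lemma.Vr} applied to the non-degenerate $\nu$ on $V_\nu$ with invariant subspace $W_0$ gives $\lambda_1(V_{\overline\nu})=\lambda_1(V_\nu/W_0)\geq \lambda_1(W_0)$. Since $W_0$ is a non-zero $\Gamma_\mu$-invariant subspace of $W$, Krylov--Bogolyubov furnishes an ergodic $\mu$-stationary measure on $\PP(W_0)$ whose cocycle average, being an FKH exponent of $W$, is $\geq\beta_{\min}(W)$; hence $\lambda_1(W_0)\geq\beta_{\min}(W)$. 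Combining, $\alpha(\overline\nu)=\lambda_1(V_{\overline\nu})\geq\beta_{\min}(W)$, contradicting the hypothesis.

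Now assume the equality $\alpha(\overline\nu)=\beta_{\min}(W)$. The same chain of inequalities (assuming $W_0\neq \{0\}$; otherwise $W':=V_\nu$ is done) forces $\lambda_1(W_0)=\lambda_1(V_\nu)=\beta_{\min}(W)$, so all FKH exponents of $W_0$ equal $\beta_{\min}(W)$, i.e.\ $W_0$ is \emph{critical}. Let $F_2:=F_2(V_\nu)$ be the second FKH subspace of $V_\nu$, so $V_\nu/F_2$ is critical with top Lyapunov $\lambda_1(V_\nu)=\beta_{\min}(W)$. The pivotal observation is $W_0\cap F_2=\{0\}$: otherwise $W_0\cap F_2$ would be a non-zero $\Gamma_\mu$-invariant subspace of the critical $W_0$, forcing $\lambda_1(W_0\cap F_2)=\lambda_1(W_0)=\lambda_1(V_\nu)$ and contradicting $\lambda_1(F_2)<\lambda_1(V_\nu)$. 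Because $\nu$ sits in the top FKH stratum of $V_\nu$, $\nu(\PP(F_2))=0$, and the pushforward $\tilde\nu:=\pi_*\nu$ under $\pi:V_\nu\to V_\nu/F_2$ is a well-defined non-degenerate ergodic $\mu$-stationary measure. Theorem \ref{thm.main.Lmu=0} applied to the critical space $V_\nu/F_2$ yields that the $\Gamma_\mu$-action there is semisimple, so $\pi(W_0)$ admits a $\Gamma_\mu$-invariant complement $\overline{W_0}^c$ in $V_\nu/F_2$. Setting $W':=\pi^{-1}(\overline{W_0}^c)$, the identity $W_0\cap F_2=\{0\}$ immediately gives $W_0\cap W'=\{0\}$, while $W_0+W'=V_\nu$ follows by pulling back $\pi(W_0)+\overline{W_0}^c=V_\nu/F_2$.

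\emph{Principal obstacle.} The delicate point is the inheritance of criticality by $W_0=V_\nu\cap W$ in the equality case, which rests on coupling Lemma \ref{lemma.Vr} with the two-sided pinching $\beta_{\min}(W)\leq \lambda_1(W_0)\leq \lambda_1(V_\nu)=\alpha(\overline\nu)=\beta_{\min}(W)$. Once $W_0$ is critical, the elementary Lyapunov argument ruling out $W_0\cap F_2\neq\{0\}$ becomes available, and this is what transfers semisimplicity on the critical quotient $V_\nu/F_2$---given by Theorem \ref{thm.main.Lmu=0}---back to the desired complement of $W_0$ in $V_\nu$.
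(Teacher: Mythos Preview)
Your overall architecture matches the paper's: pass to $V_\nu$, show $W_0:=V_\nu\cap W$ is critical, observe $W_0\cap F_2(V_\nu)=\{0\}$, apply Theorem~\ref{thm.main.Lmu=0} on the critical quotient $V_\nu/F_2(V_\nu)$, and pull back a complement. The endgame (from ``$W_0\cap F_2=\{0\}$'' onward) is essentially identical to the paper's argument.

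There is, however, a genuine gap in how you obtain the Lyapunov pinching. You invoke Lemma~\ref{lemma.Vr}, which requires $\nu$ to be non-degenerate in $\PP(V_\nu)$, and you secure non-degeneracy by ``invoking the reductions of Section~\ref{section.preliminary}'' to assume $\Gamma_\mu$ Zariski-connected. But those reductions replace $\mu$ by $\mu^\tau$ and $\nu$ by a $\mu^\tau$-ergodic piece $\nu_i$; after this substitution (a) $\nu_i$ is generally not $\mu$-stationary, so Theorem~\ref{thm.main.Lmu=0} for $\mu$ no longer applies to it, (b) you would only produce a $\Gamma_{\mu^\tau}$-invariant complement rather than a $\Gamma_\mu$-invariant one, and (c) the hypothesis $\alpha(\overline\nu)\le\beta_{\min}(W)$ does not transfer, since $\beta_{\min}$ is computed over $\Gamma_{\mu^\tau}$-invariant subspaces, which may be strictly more numerous. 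The reductions of Section~\ref{section.preliminary} are tailored to the conclusion ``$V_\nu$ is semisimple'', which passes cleanly from $H_\mu^o$ to $H_\mu$; the more specific conclusion of Theorem~\ref{thm.pure.expansion} does not.

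The fix is exactly what the paper does: bypass Lemma~\ref{lemma.Vr} and the connectedness reduction entirely by using Lemma~\ref{lemma.cocycle.invariance} (lift-invariance of the cocycle average). Once you pass to a $\mu$-ergodic component of $\nu$ projecting to $\overline\nu$, Lemma~\ref{lemma.cocycle.invariance} gives $\alpha(\nu)=\alpha(\overline\nu)$, and by Furstenberg--Kifer $\alpha(\nu)=\lambda_1(V_\nu)$. In the strict case this yields $\lambda_1(W_0)\le\lambda_1(V_\nu)=\alpha(\overline\nu)<\beta_{\min}(W)$, forcing $W_0=\{0\}$; in the equality case it gives $\lambda_1(V_\nu)=\beta_{\min}(W)$, and then your pinching $\beta_{\min}(W)\le\lambda_1(W_0)\le\lambda_1(V_\nu)=\beta_{\min}(W)$ goes through verbatim (the paper phrases this as $W_0\subset F_k(W)$). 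No non-degeneracy is needed anywhere: Theorem~\ref{thm.main.Lmu=0} only requires $V_{\tilde\nu}=V_\nu/F_2$, which holds automatically since $\supp(\nu)$ spans $V_\nu$.
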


First, we begin by recording the following consequence of \cite[Theorem 1.5]{aoun.sert.stationary1}.

\begin{lemma}[Lift invariance of cocycle-average]\label{lemma.cocycle.invariance}
Let $\mu$ be a probability measure with finite first moment on $\GL(V)$, $W$ a $\Gamma_{\mu}$-invariant subspace of $V$ and $\overline{\nu}$ a $\mu$-stationary ergodic probability measure on $\PP(V/W)$. If $\nu$ is a $\mu$-stationary 
lift of $\overline{\nu}$ then $\alpha(\overline{\nu})=\alpha(\nu)$. 
\end{lemma}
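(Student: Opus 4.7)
The proof plan is a cocycle averaging argument. Fix a norm on $V$ and equip $V/W$ with the corresponding quotient norm; write $\pi:V\to V/W$ for the projection and $\overline{v}:=\pi(v)$ for any $v\notin W$. Define
\[
\phi([v]) \;:=\; \log \frac{\|v\|}{\|\overline{v}\|}, \qquad [v]\in \PP(V)\setminus \PP(W).
\]
Then $\phi$ is a non-negative measurable function, independent of the representative $v$. Since $\pi(gv)=\overline{g}\,\overline{v}$, a direct calculation yields the cocycle identity
\[
\log\frac{\|gv\|}{\|v\|} - \log\frac{\|\overline{g}\,\overline{v}\|}{\|\overline{v}\|} \;=\; \phi(g\cdot [v])-\phi([v]),
\]
valid for every $g\in \GL(V)$ and $[v]\in \PP(V)\setminus \PP(W)$. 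Each of the two terms on the left is dominated by $\log\max(\|g\|,\|g^{-1}\|)$ and is hence $\mu\otimes\nu$-integrable by the finite first moment assumption on $\mu$. Integrating the identity against $\mu\otimes\nu$, the left-hand side equals $\alpha(\nu)-\alpha(\overline{\nu})$, while the right-hand side equals $\int\phi\,d(\mu*\nu)-\int\phi\,d\nu$, which vanishes by $\mu$-stationarity of $\nu$ \emph{provided that} $\phi\in L^1(\nu)$.

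The crux is thus to establish $\phi\in L^1(\nu)$, and this is the step I expect to be the main obstacle. The function $\phi$ blows up as $[v]$ approaches $\PP(W)$, so the claim amounts to a genuine concentration estimate on how $\nu$ accumulates near the invariant subspace $\PP(W)$. My plan is to derive this from Theorem~1.5 of \cite{aoun.sert.stationary1}, which describes stationary lifts in the strictly expanding regime and, as a by-product of its construction, furnishes quantitative moment decay of any such lift towards $\PP(W)$, implying $\phi\in L^1(\nu)$ in that case. To cover the generality of the present lemma (no Lyapunov hypothesis), one first reduces to $\nu$ being ergodic by ergodic decomposition, then restricts attention to $V_\nu$ and to the Furstenberg--Kifer--Hennion stratum of $V_\nu$ supporting $\nu$: on that stratum, the relevant invariant subspace induced by $W$ satisfies the strictly expanding inequality that is the hypothesis of Theorem~1.5, yielding $\phi\in L^1(\nu)$ in full generality. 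Feeding this back into the cocycle identity then gives $\alpha(\nu)=\alpha(\overline{\nu})$ and closes the argument.
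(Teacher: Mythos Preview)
Your coboundary identity is correct and the reduction of the claim to $\phi\in L^1(\nu)$ (after passing to an ergodic component) is valid. The gap is in your justification of $\phi\in L^1(\nu)$. You assert that on the Furstenberg--Kifer--Hennion stratum of $V_\nu$ supporting $\nu$, the subspace induced by $W$ satisfies the strictly expanding hypothesis of \cite[Theorem~1.5]{aoun.sert.stationary1}. This is false precisely in the critical situation that is the subject of the paper: if $\nu$ is ergodic and non-degenerate in $V_\nu$, then $\nu$ lives on $\PP(V_\nu)\setminus\PP(F_2(V_\nu))$ with $\alpha(\nu)=\lambda_1(V_\nu)$, and the relevant subspace $W\cap V_\nu$ has $\lambda_1(W\cap V_\nu)=\lambda_1(V_\nu)$ whenever $W\cap V_\nu\not\subset F_2(V_\nu)$. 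No strict inequality is available, Theorem~1.5 does not apply, and there is no moment decay towards $\PP(W)$ to harvest. (Integrability of $\phi$ for ergodic $\nu$ may well be true, but establishing it in the critical case essentially requires the compact-orbit description of Corollary~\ref{corol.critical.homogene} --- far heavier input than you are invoking.)

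The paper's argument sidesteps the integrability issue entirely. The inequality $\alpha(\nu)\geq\alpha(\overline{\nu})$ is immediate from the quotient norm. For the reverse one argues by contradiction: assuming $\alpha(\nu)>\alpha(\overline{\nu})$ and restricting so that $\lambda_1(V/W)=\alpha(\overline{\nu})$, the Furstenberg--Kifer fact that $\alpha(\nu)$ is an FKH exponent of $V$ together with $\lambda_1(V)=\max\{\lambda_1(V/W),\lambda_1(W)\}$ forces $\lambda_1(W)\geq\alpha(\nu)>\alpha(\overline{\nu})$. It is the \emph{contradiction hypothesis itself} that manufactures the strict expanding inequality, and only then does \cite[Theorem~1.5]{aoun.sert.stationary1} apply and close the argument. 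Your FKH stratification cannot produce this strict inequality on its own.
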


\begin{proof}
It is clear that $\alpha(\nu)\geq \alpha(\overline{\nu})$. Without loss of generality, we can suppose $\nu$ to be ergodic. 
The probability measure $\nu$ lives in the projective space of $\pi^{-1}(F_{\overline{\nu}})\supset W$ where $\pi: V\to V/W$ is the canonical projection. Hence without loss of generality we can assume that $V=\pi^{-1}(F_{\overline{\nu}})$ so that $\lambda_1(V/W)=\alpha(\overline{\nu})$. 
 Arguing by contradiction, assume that $\alpha(\nu)>\lambda_1(V/W)$. 
 By ergodicity of $\nu$, it follows from \cite[Theorem 3.9]{furstenberg.kifer} that  $\alpha(\nu)$ is some Lyapunov exponent of $V$. But by \cite[Lemma 3.6]{furstenberg.kifer}  $\lambda_1(V)=\max\{\lambda_1(V/W),\lambda_1(W)\}$, necessarily $\alpha(\nu)$ is some top Lyapunov exponent of $W$ so that $\lambda_1(W)>\alpha(\overline{\nu})$. It is enough to apply now Theorem \cite[Theorem 1.5]{aoun.sert.stationary1}. 
\end{proof}

\begin{proof}[Proof of Theorem \ref{thm.pure.expansion}]
By \cite[Theorem 1.5]{aoun.sert.stationary1} it suffices to treat the case  $\alpha(\overline{\nu})=\beta_{\min}(W)$.  The implication $(i)\Longrightarrow (ii)$ is trivial. To prove the other implication, suppose that $\nu$ is some $\mu$-stationary lift of $\overline{\nu}$ on $\PP(V)\setminus \PP(W)$.
Denote for simplicity $V':=V_{\nu}$ and $W':=V_{\nu}\cap W$. 
By  Lemma \ref{lemma.cocycle.invariance}, $\alpha(\nu)=\beta_{\min}(W)$  so that $W'\subset F_k(W)$ where $F_k(W)$ is the smallest FKH space of $W$.  By definition of the smallest FKH exponent $\beta_{\min}(W)$, it follows that $F_2(W')=\{0\}$.
Since $\beta_{\min}(W)=\lambda_1(V')$, we deduce that $F_2(V')\cap W'=\{0\}$. 
Since $\nu(F_2(V'))=0$,  $V'/F_2(V')$ is the projective space generated by $\tilde{\nu}$, the projection of $\nu$ on $V'/F_2(V')$. 
Applying now   Theorem \ref{thm.main.Lmu=0} to $V'/F_2(V')$, we deduce that the subspace 
$(W'+F_2(V'))/F_2(V')$ admits a $\Gamma_{\mu}$-invariant complement in $V'/F_2(V')$. Since $F_2(V')\cap W'=\{0\}$, lifting this invariant complement to $V'$ gives rise to an invariant complement $U$ of $W'$ inside $V'$. Clearly $U\cap W=\{0\}$. Moreover 
$U\simeq V'/W'$ is isomorphic as $\Gamma_{\mu}$-space to $V_{\overline{\nu}}$. 
\end{proof}

\begin{proof}[Proof of Theorem \ref{thm.with.W}]
The deduction of the general case (Theorem \ref{thm.with.W}) from the purely critical case (Theorem \ref{thm.pure.expansion}) is done in the same way as in  the proof \cite[Theorem 1.5]{aoun.sert.stationary1} using \cite[Theorem 5.1]{aoun.sert.stationary1}.   We indicate  the arguments. The implication $(ii) \implies (i)$ follows immediately from \cite[Theorem 1.1]{aoun.sert.stationary1} (contracting case) applied to $W'$ as ambient space and $W'\cap W$ as invariant subspace. Now we prove $(i) \implies (ii)$. Let $\overline{\nu}$ be a $\mu$-stationary ergodic probability measure on $\PP(V/W)$ such that $\alpha(\overline{\nu})\leq \lambda_1(W)$. Denote by $j$ the smallest index such that 
$\beta_{j+1}(W)<\alpha(\overline{\nu})\leq \beta_j(W)$. Without loss of generality we can assume $V/W=V_{\overline{\nu}}$. 
Projecting $\nu$ to $\PP(V/F_{j+1}(W))$, we get a $\mu$-stationary measure $\tilde{\nu}$ on $\PP(V/F_{j+1}(W))\setminus \PP(W/F_{j+1}(W))$ that lifts $\overline{\nu}$. Since $\beta_{\min}(W/F_{j+1}(W))=\beta_j(W)$ (see for instance \cite[Corollary 3.4]{aoun.sert.stationary1}), our assumption on $\overline{\nu}$ yields $\beta_{\min}(W/F_{j+1}(W))\geq \alpha(\overline{\nu})$.  Theorem \ref{thm.pure.expansion} applied to $V/F_{j+1}(W)$ as ambient space and $W/F_{j+1}(W)$ as subspace yields a $\Gamma_{\mu}$-invariant complement $\tilde{W}$ of $W/F_{j+1}(W)$ in $V/F_{j+1}(W)$. 
Its preimage $W'<V$   by the projection map $V\to V/F_{j+1}(W)$ is a $\Gamma_{\mu}$-invariant subspace whose intersection with $W$ is equal to $F_{j+1}(W)$ (hence of Lyapunov exponent $\beta_j(W)<\alpha(\overline{\nu}))$ and such that $V_{\overline{\nu}}=W'/W'\cap W$.
\end{proof}

\begin{remark}[On uniqueness of lifts]\label{rk.not.unique}
Keep the assumptions of Theorem \ref{thm.with.W}. Suppose that there exists a $H_\mu$-invariant subspace $W'$ that intersects   $W$  only in an invariant subspace of slower expansion and such that $\overline{\nu}$ is supported in $\PP(W'/W'\cap W)$. 
\begin{enumerate}
\item Suppose $\lambda_1(W)>\alpha(\overline{\nu})$.  Then   $\overline{\nu}$ admits a unique $\mu$-stationary lift $\nu$ on $\PP(V)\setminus \PP(W)$ and the subspace generated by $\nu$ is $\PP(W')$ \cite[Theorem 1.5]{aoun.sert.stationary1}. 
\item  Suppose that $\lambda_1(W)=\lambda(V/W)$. 
Although $\PP(W')$ admits a unique $\mu$-stationary lift of $\overline{\nu}$, $\overline{\nu}$ may admit many lifts on $\PP(V)\setminus \PP(W)$ and these may even be non-degenerate.
An example is a Zariski-dense probability measure $\mu$ on $H=\SO_2(\R)\times \SO_2(\R)$ acting on $V=\R^2\oplus \R^2$. For any $x\in \PP(V)$ that does not belong to one of the two invariant $2$-planes, the $H$-orbit of $x$ is a proper algebraic variety in $\PP(V)$ which is not included in any proper projective subspace. Being compact, each such orbit supports a (unique) stationary probability measure.
\end{enumerate}
\end{remark}

\begin{remark}[Affine case]
We illustrate Theorem \ref{thm.with.W} by showing how it recovers Bougerol--Picard's result \cite{bougerol.picard} in the invertible case:  if $\mu$ is a probability measure on the affine group $\textrm{Aff}(\R^d)$ such that $\Gamma_\mu$ does not preserve an affine subspace of $\R^d$, then there exists a $\mu$-stationary probability measure on $\R^d$ if and only if the top Lyapunov exponent of the linear part is negative. 
The backward implication (contracting case) being standard, we comment on the forward implication. It is enough to embed $\textrm{Aff}(\R^d)$ in $\GL_{d+1}(\R)$ in the usual way and to take $W$ as the hyperplane in $V:=\R^{d+1}$ spanned by the first $d$  vectors of the canonical basis. In this case the action on the quotient $V/W$ is trivial so  $\alpha(\overline{\nu})\leq \lambda_1(W)\Longleftrightarrow 0\leq \lambda_1(W)$. Moreover stationary measures on $\R^d$ (for the affine action) correspond to stationary measures on $\PP(V)\setminus \PP(W)$ (for the linear action). Affine irreducibility is equivalent to saying that any $\Gamma_\mu$-invariant subspace of $V$ must be included in $W$. In particular, a subspace $W'$ fulfilling  condition (ii) of Theorem \ref{thm.with.W} cannot exist, showing indeed the absence of stationary probability measures on $\R^d$ in the critical/expanding situation. 

Finally, one also recovers \cite[Theorem 5.1]{bougerol.tightness} in a similar way (realizing the linear action as a projective action). The extra conclusion of compactness in \cite[Theorem 5.1]{bougerol.tightness} follows from Corollary \ref{corol.critical.homogene} (or more simply transience of random walks on non-compact semisimple Lie groups, due to Furstenberg \cite{furstenberg.noncommuting}). We omit the details. 
\end{remark}

\subsection{Proof of Corollary \ref{corol.homogeneous}}
First we show $(2.1)\Longrightarrow (2.2)$ together with the uniqueness statement (1). Denote $F_\mu:=F_2(V_{\mathcal{O}})$. Suppose that there exists a $\mu$-stationary probability measure $\nu$ on $\mathcal{O}$. We can suppose that $\nu$ is ergodic, and since $\mathcal{O} \cap \PP(F_\mu)=\emptyset$, $\nu(\PP(F_\mu))=0$. Let $\psi$ be the natural $H_\mu$-equivariant projection $\PP(V_\mathcal{O}) \setminus \PP(F_\mu) \to \PP(V_\mathcal{O}/F_\mu)$ and denote   $\overline{\nu}:=\psi_\ast \nu$. This is a $\mu$-stationary probability measure on the $H_\mu$-orbit $\psi(\mathcal{O})$ in $\PP(V_\mathcal{O}/F_\mu)$. 
Observe that by \cite{furstenberg.kifer},  $F_2(V_\mathcal{O}/F_\mu)=\{0\}$ (in other words $V_\mathcal{O}/F_\mu$ is critical). Therefore Corollary \ref{corol.critical.homogene} implies that $\psi(O)$ is closed. Now we show that $\overline{\mathcal{O}}\setminus  \mathcal{O}\subset \PP(F_\mu)$. We argue by contradiction. Suppose this is not the case. Since $\psi(\mathcal{O})$ is closed, this implies that $\psi(\mathcal{\overline{O}}\setminus (O\cup \PP(F_\mu))$ is a non-empty $H_{\mu}$-invariant subset of the orbit $\psi(O)$.
Hence $\psi(\mathcal{\overline{O}}\setminus (O\cup \PP(F_\mu))=\psi(\mathcal{O})$. 
Consider now a $\overline{\nu}$-generic point $\overline{x}\in \psi(\mathcal{O})$, i.e.~ $\frac{1}{n}\sum_{i=1}^n{\mu^i \ast \delta_{\overline{x}}}$ converges weakly to $\overline{\nu}$. We can find $x\in \overline{\mathcal{O}}\setminus (\mathcal{O}\cup \PP(F_\mu))$ such that $\psi(x)=\overline{x}$. 
By \cite[Proposition 1.2]{aoun.sert.stationary1} it follows that  $\frac{1}{n}\sum_{i=1}^n{\mu^i \ast \delta_x}\to \nu$.  Since the orbit $\mathcal{O}$ is locally closed (see for instance \cite[8.3]{humphreys}) and  $x \in \overline{\mathcal{O}}\setminus \mathcal{O}$, it follows from the equidistribution above that $\nu(\overline{\mathcal{O}}\setminus \mathcal{O})=1$, which contradicts  $\nu(\mathcal{O})=1$. This shows that $(2.1) \Longrightarrow (2.2)$. 
Now we show the uniqueness of $\nu$. 
Clearly $V_{\psi(\mathcal{O})}=V_{\mathcal{O}}/F_\mu$. Hence by Theorem \ref{thm.main.Lmu=0}, $V_{\mathcal{O}}/F_\mu$ is completely reducible. Hence the  uniqueness of $\nu$ follows directly from the uniqueness of a $\mu$-stationary probability measure on $\psi(O)$, by \cite[Theorems 1.5 \& 1.7]{BQ.projective}, and then by the uniqueness
of lift statement in \cite[Theorem 1.1]{aoun.sert.stationary1}. 

Conversely, assume that $\psi(\mathcal{O})$ is compact and that $\overline{\mathcal{O}}\setminus \mathcal{O} \subset F_\mu$. By compactness of $\psi(\mathcal{O})$ there exists a $\mu$-stationary probability measure $\overline{\nu}$ on $\psi(\mathcal{O})$.  Let $x\in \mathcal{O}$ such that $\psi(x)$ is a $\overline{\nu}$-generic point (i.e.~$\frac{1}{n} \sum_{k=1}^n \mu^{k} \ast \delta_{\psi(x)} \to \overline{\nu}$ as $n \to \infty$). By \cite[Theorem 1.1]{aoun.sert.stationary1} there exists a unique $\mu$-stationary probability measure $\nu$ on $\PP(V_\mathcal{O}) \setminus \PP(F_\mu)$ lifting $\nu$ and, by \cite[Proposition 1.2]{aoun.sert.stationary1}, $x$ is generic for $\nu$. It follows that $\nu$ is supported on $\overline{\mathcal{O}}$. 
Since $\nu(\PP(F_\mu))=0$ and $\overline{\mathcal{O}}\setminus \mathcal{O}\subset F_\mu$, we get that $\nu(\mathcal{O})=1$.  \qed

\end{document}